\def\MODE{1}
\numberwithin{equation}{section}
\begin{document}

\title{
Non-Asymptotic Analysis of Robust Control from \\Coarse-Grained Identification
}

\author{Stephen Tu, Ross Boczar, Andrew Packard, Benjamin Recht%
\if\MODE1
\else
\thanks{S.~Tu, R.~Boczar, A.~Packard, and B.~Recht are with the University of California, Berkeley, CA~94720, USA. \texttt{\{stephent,boczar,apackard,brecht\}@berkeley.edu}}
\fi}
\maketitle

%%%%%%%%%%%%%%%%%%%%%%%%%%%%%%%%%%%%%%%%%%%%%%%%%%%%%%%%%%%%%%%%%%%%%%%%%%%%%%%%
%!TEX root = sector.tex

\begin{abstract}
This work explores the trade-off between the number of samples required to accurately
build models of dynamical systems and the degradation of performance in various
control objectives due to a coarse approximation.  In particular, we show that
simple models can be easily fit from input/output data and are sufficient for
achieving various control objectives.
We derive bounds on the number of noisy input/output samples from a stable linear time-invariant system that are sufficient to guarantee that the corresponding finite impulse response approximation is close to the true system in the $\mathcal{H}_\infty$-norm. We demonstrate that these demands are lower than those derived in prior art which aimed to accurately identify dynamical models.  We also explore how different physical input constraints, such as power constraints, affect the sample complexity.
Finally, we show how our analysis fits within the established framework
of robust control, by demonstrating how a controller designed for an
approximate system provably meets performance objectives on the true
system.
\end{abstract}

%%%%%%%%%%%%%%%%%%%%%%%%%%%%%%%%%%%%%%%%%%%%%%%%%%%%%%%%%%%%%%%%%%%%%%%%%%%%%%%%
%!TEX root = sector.tex

\section{Introduction}

Most control design relies on establishing a model of the system to be
controlled.  For simple physical systems, a model with reasonable fidelity can
typically be constructed from knowledge of the physics at hand.  However, for
complex, uncertain systems, building models from first principles becomes
quickly intractable and one usually resorts to fitting models from empirical
input/output data.  This approach naturally raises an important question: how well
must we identify a system in order to control it?

In this work, we attempt to answer this question by striking a balance
between system identification and robust control.  We aim to identify coarse
estimates of the true underlying model while coupling our estimation with precise
probabilistic bounds on the inaccuracy of our estimates.  With a coarse
model in hand, we can use standard robust synthesis tools that take into
account the derived bounds on the model uncertainty.

More precisely, given an unknown stable discrete-time plant $G$, we bound the
error accrued by fitting a finite impulse response (FIR) approximation to $G$
from noisy output measurements.  These bounds balance the sample complexity of
estimating an unknown FIR filter against the capability of such a filter to
approximate the behavior of $G$.  In particular, we show that notably short FIR
filters provide a sufficient approximation to stable systems in order to ensure
robust performance for a variety of control design tasks.  In particular, we
demonstrate considerable savings in experimental measurements as compared to
other non-asymptotic schemes that aim to precisely identify $G$.

In the process of fitting a FIR filter, a natural question arises as to
what inputs should be used to excite the unknown system. Of course,
due to actuator limitations and other physical constraints, we are not free to choose
any arbitrary input.
Hence, we model the choice of inputs as an experiment design question,
where the practitioner specifies a bounded input set and asks for the best
$m$ inputs to use to minimize FIR identification error.
We propose a new optimal experiment design procedure for solving this problem,
and relate it to the well studied $A$-optimal experiment design objective from
the statistics literature~\cite{pukelsheim93}.
This connection is used to study practical cases of input constraints.
Specifically, we prove that when the inputs are $\ell_2$-power constrained,
then impulse responses are the optimal choice of inputs.  However, we show that
is not the case when the inputs are $\ell_\infty$-constrained. For
$\ell_\infty$ constraints, we construct a deterministic set of inputs which is
within a factor of 2 to the optimal solution.  Combining these designs with our
probabilistic bounds, we show that for estimating a length-$r$ FIR filter
$G_r$, as long as $m \geq 4r$, the residual $\mathcal{H}_\infty$ error $\norm{G_r -
\widehat{G}_r}_\infty$ on the estimate $\widehat{G}_r$ satisfies
$\widetilde{O}(1/\sqrt{m})$\footnote{The notation $\widetilde{O}(\cdot)$ suppresses dependence on polylogarithmic factors.} with high probability.  This is a
substantial improvement over the $\widetilde{O}(\sqrt{r/m})$ scaling
which we show occurs in the $\ell_2$-constrained case.
We also prove an information-theoretic lower bound which shows that
when the true system happens to be an FIR filter,
our bounds are minimax optimal up to constant factors
for the given estimation problem.

Experimentally, we show that $\Hinf$ loop-shaping controller design on the
estimated FIR model, using probabilistic bounds, can be used to synthesize
controllers with both stability and performance guarantees on the closed loop
with the true plant.  We also demonstrate that our probabilistic bounds can be
estimated directly from data using Monte--Carlo techniques.
We hope that our results encourage further investigation into a rigorous
foundation for data-driven controller synthesis.

\subsection{A sample complexity bound for FIR identification}
\label{sec:intro:results}

We now state our main results: upper and lower bounds on the sample complexity
of FIR system identification.  Let $G$ be a stable, discrete-time SISO
LTI system.  Suppose we are given query access to $G$ via independent, noisy
measurements of the form
\begin{align}\label{eq:query-model}
    Y_{u,T} := (g \ast u)_{k=0}^{T-1} + \xi \:, \:\: \xi \sim N(0, \sigma^2 I_{T}) \:.
\end{align}
Above, $g$ denotes the impulse response of $G$ and
$T$ is the length of the output we observe.  We assume that we are allowed to
choose any input $u$ contained within a bounded set $\mathcal{U}$, which is
specified beforehand.  From these measurements, we can approximate $G$ by a length-$r$ FIR filter
$\widehat{G}_r(z)$ as
\begin{align}\label{eq:g-tilde-r-def}
    \widehat{G}_r(z) := \sum_{k=0}^{r-1} \widehat{g}_k z^{-k} \:,
\end{align}
where $r \leq T$, and the coefficients $\widehat{g}_k$ are estimated from ordinary least-squares
(c.f. Section~\ref{sec:fir_sysid}).
We note that the extra degree of freedom in allowing $r \neq T$
reduces the variance of the higher lag terms, which
is a standard trick used in the system-identification literature
(see e.g. Section 2 of Wahlberg et al.~\cite{wahlberg2010non}).

The main quantity of interest in this setting is the number of timesteps needed
in order to ensure that the $\Hinf$-norm of the error $G -
\widehat{G}_r$ satisfies the bound $\norm{G - \widehat{G}_r}_\infty \leq
\varepsilon$ with probability at least $1-\delta$ over the randomness of the
noise.  Here, the total number of timesteps is the product of the number of
queries of the form~\eqref{eq:query-model} times length of the queries.  That
is, the number of timesteps is $m \times T$, where $m$ is number of
measurements taken and $T$ denotes the length of each measurement.
This quantity depends on the set $\mathcal{U}$; we
restrict (for now) to the case where $\mathcal{U}$ is either the unit $\ell_2$-ball or the unit $\ell_\infty$-ball.  These two sets comprise the most common input constraints
found in the controls literature.  Nevertheless, our analysis later
will cover all $\ell_p$-balls for $p \in [1,\infty]$.

In both cases, we must first consider the length of the FIR
filter~\eqref{eq:g-tilde-r-def} required to ensure reasonable approximation in
the $\mathcal{H}_\infty$-norm.  We must guarantee that we are able to accurately capture
the large components of the impulse response of $G$.  Therefore, we will need
some measure of how quickly the impulse response coefficients tend to zero.  It
turns out that the $\Hinf$-norm of $G$ provides a
convenient proxy for the decay of the impulse response.
Throughout, we will use the following sufficient
bound on the truncation length:
\begin{defn}[Sufficient Length Condition]
Let $G$ be stable with stability radius $\rho \in (0, 1)$.
Fix a $\varepsilon > 0$. Let $R(\varepsilon)$ be the smallest
integer which satisfies
\begin{align}
  R(\varepsilon) \geq \inf_{\rho < \gamma < 1} \: \frac{1}{1-\gamma} \log\left( \frac{\hinfnorm{G(\gamma z)}}{\varepsilon (1-\gamma)} \right) \:. \label{eq:r_condition}
\end{align}
\end{defn}
Equation \eqref{eq:r_condition} characterizes the approximation error of an FIR
filter to $G$ as a balance between the growth of $1/(1-\gamma)$ versus the
decay of the logarithm of $\hinfnorm{G(\gamma z)}$, as $\gamma$ varies between
$(\rho, 1)$.
%The fact that such an $r$ suffices to bound the length of a
%high-fidelity FIR approximation to $G$ is of independent interest and is
%described in Section~\ref{sec:truncation}.

We first study the $\ell_2$-ball case.  In this case, we will set all $m$
inputs to an impulse; that is, $u_i = e_1$, where $e_1 \in \R^r$ is the first
standard basis vector.
\begin{thm}[Main result, $\ell_2$-constrained case]
\label{thm:main_result_ltwo}
Fix an $\varepsilon > 0$ and $\delta \in (0, 1)$, and suppose that $\mathcal{U}
= \{ x \in \R^T : \norm{x}_2 \leq 1 \}$. Let $G$ be stable with stability
radius $\rho \in (0, 1)$, and set $r \geq R(\varepsilon/2)$ from \eqref{eq:r_condition}
and $T = 2r$.
Set $m$ measurements $u_1, ..., u_m \in \mathcal{U}$, with $u_i = e_1$ for
$i=1, ..., m$, where $m \geq 1$ satisfies
\begin{align}
    m \geq C \frac{\sigma^2 r}{\varepsilon^2} \left(\log{r} + \log\left(\frac{1}{\delta}\right) \right) \:.
\end{align}
Then, with probability at least $1-\delta$,
we have $\norm{G - \widehat{G}_r}_\infty \leq \varepsilon$.
Above, $C$ is an absolute positive constant.
\end{thm}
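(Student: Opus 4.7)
The plan is to split the $\Hinf$ error into a deterministic truncation part and a stochastic estimation part,
\begin{align*}
\norm{G - \widehat{G}_r}_\infty \leq \norm{G - G_r}_\infty + \norm{G_r - \widehat{G}_r}_\infty \:,
\end{align*}
where $G_r(z) := \sum_{k=0}^{r-1} g_k z^{-k}$ denotes the length-$r$ truncation of $G$, and to show that each summand is at most $\varepsilon/2$.

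For the truncation piece I would appeal directly to the Sufficient Length Condition. Since $r \geq R(\varepsilon/2)$, unfolding \eqref{eq:r_condition} and using the tail estimate $|g_k| \leq \gamma^{-k} \hinfnorm{G(\gamma z)}$ (valid for any $\rho < \gamma < 1$, by Cauchy's estimate applied to $G(\gamma z)$) gives $\norm{G - G_r}_\infty \leq \sum_{k \geq r}|g_k| \leq \varepsilon/2$ after optimizing over $\gamma$. The length $T = 2r$ plays no role in this step.

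For the estimation piece, the impulse input $u_i = e_1$ is the crucial simplification: the per-experiment design matrix is $I_r$ vertically padded with zeros in $\R^{T \times r}$, so stacking $m$ experiments gives Gram matrix $m I_r$, and the least-squares estimator reduces to the coordinate-wise average of the first $r$ entries of the $m$ observation vectors. Hence $\widehat{g}_k = g_k + \eta_k$ with $\eta_0,\dots,\eta_{r-1}$ i.i.d.\ $N(0, \sigma^2/m)$; the tail coefficients $g_r,\dots,g_{T-1}$ appear in the data but do not bias the first $r$ estimates, thanks to the triangular structure of convolution with an impulse. The estimation error therefore equals $\sup_{\omega \in [0, 2\pi]} |P(e^{j\omega})|$, where $P(e^{j\omega}) := \sum_{k=0}^{r-1} \eta_k e^{-jk\omega}$ is a zero-mean Gaussian trigonometric polynomial.

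The hard part is controlling this supremum at the correct $\widetilde{O}(\sqrt{r/m})$ rate. I would combine two ingredients: a pointwise bound---for each fixed $\omega$, $P(e^{j\omega})$ is complex Gaussian with variance at most $r\sigma^2/m$, giving a sub-Gaussian tail at scale $\sigma\sqrt{r \log(1/\delta')/m}$---together with an $\varepsilon'$-net on $[0,2\pi]$ and Bernstein's inequality for trigonometric polynomials, $\norm{P'}_\infty \leq (r-1)\norm{P}_\infty$, which Lipschitz-controls the supremum by the maximum over the net. A net of size $O(r)$, a union bound, and a self-bounding rearrangement of the Bernstein constant yield, with probability at least $1-\delta$,
\begin{align*}
\norm{G_r - \widehat{G}_r}_\infty \;\lesssim\; \sigma \sqrt{\frac{r (\log r + \log(1/\delta))}{m}} \:.
\end{align*}
Requiring this bound to be at most $\varepsilon/2$ reproduces exactly the sample complexity $m \geq C \sigma^2 r (\log r + \log(1/\delta)) / \varepsilon^2$ asserted in the theorem. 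The self-bounding step is the subtlest point, since the Bernstein Lipschitz constant is proportional to the sup itself, and care must be taken to isolate the sup on one side of the inequality without losing the sharp $\sqrt{\log r}$ factor.
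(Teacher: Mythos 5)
Your proposal is correct and follows essentially the same route as the paper: the same triangle-inequality split with the Cauchy-estimate tail bound handling the truncation term, the observation that impulse inputs make the error coefficients i.i.d.\ $N(0,\sigma^2/m)$, and a discretization of the supremum to $O(r)$ grid points---your Bernstein-plus-self-bounding rearrangement is exactly the content of the Bhaskar et al.\ discretization lemma the paper invokes, and your per-point union bound substitutes for their expected-maximum-plus-Gaussian-concentration step while yielding the same $\sqrt{\log r + \log(1/\delta)}$ dependence and hence the stated sample complexity. One small slip: the tail estimate should read $\abs{g_k} \leq \hinfnorm{G(\gamma z)}\,\gamma^{k}$ (not $\gamma^{-k}$), as in the paper's lemma on the decay of the impulse-response coefficients.
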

Theorem~\ref{thm:main_result_ltwo} states that the number of timesteps to
achieve identification error $\varepsilon$ with $\ell_2$-constrained inputs
scales as $\widetilde{O}(\sigma^2 r^2/\varepsilon^2)$
in the regime when $\sigma/\varepsilon \gg 1$.
It also turns out that this input
ensemble is optimal for the $\ell_2$-ball case, which we will discuss shortly.

We next turn to the $\ell_\infty$-ball case.  In this case, we take $m = 2n$ to be an
even number, and construct the measurement ensemble
\begin{align}\label{eq:sinusoid_inputs}
	u^{(\mathrm{c})}_{i,t} = \cos\left(\frac{2\pi i t}{n}\right)~~\quad\mbox{and}\quad~~
	u^{(\mathrm{s})}_{i,t} = \sin\left(\frac{2\pi i t}{n}\right)~~\quad\mbox{for } i=0,..., n-1\,.
\end{align}
With this measurement ensemble, we prove the following result
for $\ell_\infty$-constraints.
\begin{thm}[Main result, $\ell_\infty$-constrained case]
\label{thm:main_result_linfty}
Fix an $\varepsilon > 0$ and $\delta \in (0, 1)$, and
suppose that $\mathcal{U} = \{ x \in \R^T : \norm{x}_\infty \leq 1 \}$.
Let $G$ be stable with stability radius $\rho \in (0, 1)$,
and set $r \geq R(\varepsilon/2)$ from \eqref{eq:r_condition} and $T = 2r$.
Set $m$ measurements as described in
\eqref{eq:sinusoid_inputs}, where $m \geq 4r$ satisfies
\begin{align}
    m \geq C \frac{\sigma^2}{\varepsilon^2} \left(\log{r} + \log\left(\frac{1}{\delta}\right) \right) \:.
\end{align}
Then, with probability at least $1-\delta$,
we have $\norm{G - \widehat{G}_r}_\infty \leq \varepsilon$.
Above, $C$ is an absolute positive constant.
\end{thm}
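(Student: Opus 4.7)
The plan is to decompose the overall error into a deterministic truncation bias plus a random estimation error, and to exploit the Fourier structure of the sinusoidal ensemble \eqref{eq:sinusoid_inputs} so that the random component has $\Hinf$ deviation of order $\sigma\sqrt{(\log r)/m}$ rather than the $\sigma\sqrt{r(\log r)/m}$ that one would get with generic inputs.

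First, I write $\norm{G - \widehat{G}_r}_\infty \leq \norm{G - G_r}_\infty + \norm{G_r - \widehat{G}_r}_\infty$, where $G_r$ is the length-$r$ truncation of $G$. The choice $r \geq R(\varepsilon/2)$ bounds the first term by $\varepsilon/2$ via \eqref{eq:r_condition}, so it suffices to control the estimation term by $\varepsilon/2$ with probability at least $1-\delta$. Stacking the $m$ regression problems yields $Y = U g_r + b + \xi$, where $U \in \R^{mT\times r}$ is the block of lower-triangular Toeplitz matrices built from \eqref{eq:sinusoid_inputs}, $b$ is the deterministic bias coming from the tail $\{g_k\}_{k\geq r}$ of the impulse response, and $\xi \sim N(0,\sigma^2 I)$. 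The OLS error is then $\widehat{g} - g_r = (U^\top U)^{-1} U^\top(\xi + b)$, and writing $\phi(\omega) = (1, e^{-j\omega},\ldots,e^{-j(r-1)\omega})^*$ gives the pointwise identity $\widehat{G}_r(e^{j\omega}) - G_r(e^{j\omega}) = \phi(\omega)^*(U^\top U)^{-1} U^\top(\xi + b)$.

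The crucial ingredient is that the Gram matrix is well conditioned: with $T = 2r$ and $m = 2n \geq 4r$, orthogonality of the complex exponentials at the frequencies $2\pi i/n$ summed over $t = 0,\ldots,T-1$ implies $U^\top U \succeq c\, mT\, I_r$ for an absolute constant $c > 0$. Combined with $\norm{\phi(\omega)}_2^2 = r$ and $T = 2r$, this yields the uniform pointwise variance bound $\sigma^2 \phi(\omega)^*(U^\top U)^{-1}\phi(\omega) \leq C\sigma^2/m$, which is precisely the source of the $1/m$ (rather than $r/m$) scaling compared with the impulse ensemble used in Theorem~\ref{thm:main_result_ltwo}. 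The Gaussian process $\omega \mapsto \phi(\omega)^*(U^\top U)^{-1}U^\top\xi$ is then pointwise sub-Gaussian with parameter $C\sigma/\sqrt{m}$, and its Lipschitz constant in $\omega$ is polynomial in $r$; a discretization of $[0,2\pi)$ on a net of polynomial size in $r$ together with a standard Gaussian union bound gives $\sup_\omega |\phi(\omega)^*(U^\top U)^{-1}U^\top\xi| \leq C\sigma\sqrt{(\log r + \log(1/\delta))/m}$ with probability at least $1-\delta$. The deterministic bias contribution $\phi(\omega)^*(U^\top U)^{-1}U^\top b$ is handled separately: $\norm{b}_2$ is controlled by the tail of the impulse response, which is itself small under the sufficient length condition, so that its contribution to $\norm{\widehat{G}_r - G_r}_\infty$ can be absorbed into $\varepsilon/4$ by slightly enlarging constants in $R(\cdot)$.

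The main obstacle is the conditioning estimate $U^\top U \succeq cmT\, I_r$: the ensemble \eqref{eq:sinusoid_inputs} gives exact orthogonality for \emph{circular} convolutions, but the observation model \eqref{eq:query-model} uses a \emph{linear}, zero-padded convolution, and the boundary terms perturb the otherwise clean DFT diagonalization. Verifying that those boundary terms are controlled uniformly as soon as $m \geq 4r$ and $T = 2r$, and in particular that no spurious factor of $r$ leaks into the variance bound, is the technical heart of the proof. Once that inequality is in place, the remaining Gaussian chaining and the truncation-bias bookkeeping are relatively routine.
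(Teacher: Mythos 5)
Your overall route is the paper's: the same split into truncation bias (handled by $r \geq R(\varepsilon/2)$ via Lemma~\ref{lem:bounding_C}) plus Gaussian estimation error, the same sinusoidal ensemble making the Gram well conditioned so that the retained $r$ coefficients carry total variance $O(\sigma^2/m)$, and the same discretization-plus-Gaussian-maxima step (the paper uses Lemma~\ref{lem:discretization} together with Lemma~\ref{lem:concentration_of_error}). The step you flag as the technical heart --- that the linear, zero-padded convolution might leak a factor of $r$ into the conditioning bound --- is settled in the paper by an exact computation rather than a perturbation argument: Lemmas~\ref{lem:general_toep_structure} and~\ref{lem:real_inputs} show that each off-diagonal entry of $\Sigma(u) = Z^\T Z$ is a fixed boundary-dependent coefficient times $\sum_{i=0}^{n-1} e^{-2\pi j i(\ell-k)/n}$, which vanishes exactly once $n \geq T$ (this is precisely where $m = 2n \geq 4r = 2T$ enters; note the relevant orthogonality is the sum over the $n$ frequencies at a fixed lag difference, not the time sum over $t = 0,\ldots,T-1$ you invoke). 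The linear-versus-circular boundary effect appears only on the diagonal, which equals $\frac{m}{2}(T, T-1, \ldots, 1)$; since only the first $r$ lags are retained and $T = 2r$, every retained entry is at least $\frac{m}{2}(r+1)$, which is your claimed $U^\T U \succcurlyeq \frac{mT}{4} I_r$, and it yields $\Tr([\Sigma(u)^{-1}]_{[r]}) = \frac{2}{m}(H_{2r} - H_r) = O(1/m)$ as used in Lemma~\ref{lem:fir_upper_bounds}. So the inequality you left unverified is true, but your proof is incomplete without this computation.

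The second deviation is your bias term $b$, which is an artifact of regressing only onto the first $r$ coefficients. The paper runs least squares on all $T = 2r$ coefficients: the length-$T$ output window in \eqref{eq:query-model} depends only on $g_0,\ldots,g_{T-1}$, so the model is exactly linear-Gaussian and $\widehat{g}_{0:T-1} - g_{0:T-1} \sim N(0, \sigma^2 (Z^\T Z)^{-1})$ with no bias; truncating to the first $r$ coordinates then gives the error polynomial directly. Even within your formulation the bias contribution is identically zero for this ensemble, since $U^\T b = U^\T U_2\, g_{r:T-1}$ where $U^\T U_2$ is an off-diagonal block of the exactly diagonal $\Sigma(u)$. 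Hence your plan to absorb it by ``slightly enlarging constants in $R(\cdot)$'' is unnecessary, and as written it would prove a variant of the theorem with a hypothesis stronger than the stated $r \geq R(\varepsilon/2)$.
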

In the regime when $\sigma/\varepsilon \gg 1$,
Theorem~\ref{thm:main_result_linfty} states that the number of timesteps to
achieve identification error $\varepsilon$ with $\ell_\infty$-constrained
inputs scales as $\widetilde{O}(\sigma^2 r/\varepsilon^2)$. This is substantially
more efficient than the complexity $\widetilde{O}(\sigma^2 r^2/\varepsilon^2)$ which
arises in the $\ell_2$-constrained case.
We conclude by noting that this particular input ensemble is
optimal for the $\ell_\infty$-ball case up to constants,
which we turn our attention to now.

For the lower bound, we assume that $G$ itself is an length-$r$ FIR filter.
We consider the general case when all $m$ inputs
are constrained to a unit $\ell_p$-ball, where $p \in [1, \infty]$.
\begin{thm}[Main result, minimax risk lower bound]
\label{thm:main_result_lower_bound}
Fix a $p \in [1, \infty]$ and $r \geq 16$.
Suppose that $m \geq 1$ measurements $u_1, ..., u_m \in \R^T$ are fixed beforehand,
with $T = 2r$ and $\norm{u_i}_p \leq 1$ for all $i = 1, ..., m$.
Let $\mathscr{H}_r$ denote the space of all length-$r$ FIR filters.
We have that
\begin{align}
  \inf_{\widehat{G}} \sup_{G \in \mathscr{H}_r} \E\hinfnorm{ \widehat{G} - G } \geq C \sigma \sqrt{\frac{r^{2/\max(p, 2)} \log{r}}{m}} \:,
\end{align}
where the infimum ranges over all measurable functions $\widehat{G} : \otimes_{k=1}^{m} \R^T \longrightarrow \mathscr{H}_r$, and $C$ is an absolute positive constant.
\end{thm}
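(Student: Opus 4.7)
I would proceed by a Fano-type reduction. Since the observations $Y_i = \Phi_{u_i} g + \xi_i$ are Gaussian with common covariance $\sigma^2 I_T$, the KL divergence between any two observation laws indexed by $g, g' \in \mathscr{H}_r$ is
\begin{equation*}
    \mathrm{KL}(P_g \,\|\, P_{g'}) = \frac{1}{2\sigma^2}\sum_{i=1}^m \|\Phi_{u_i}(g - g')\|_2^2 \:,
\end{equation*}
so it suffices to exhibit a packing $\{g^{(k)}\}_{k=1}^M \subset \mathscr{H}_r$ with pairwise $\mathcal{H}_\infty$-separation at least $2\delta$ and pairwise KL divergence at most $c \log M$; Fano's inequality then yields a minimax lower bound of order $\delta$.

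\textbf{Packing, KL bound, and pigeonhole.} For the packing I would take the length-$r$ truncated sinusoids $g^{(k)}_t = (\delta/\sqrt{r})\cos(\omega_k t)$ at frequencies $\omega_k = 2\pi k / r$. Because the length-$r$ Dirichlet kernel vanishes on the $r$-point Fourier grid except at the origin, the transfer functions $G^{(k)}$ are essentially shifted Dirichlet kernels of height $\delta\sqrt{r}/2$, giving $\|G^{(k)}\|_\infty \asymp \delta\sqrt{r}$ and $\|G^{(k)} - G^{(j)}\|_\infty \gtrsim \delta\sqrt{r}$ for distinct $k, j$. Up to boundary effects from truncating the linear convolution $u_i \ast g^{(k)}$ to $T = 2r$ samples, a direct Fourier calculation yields $\|\Phi_{u_i} g^{(k)}\|_2^2 \lesssim \delta^2 |U_i(\omega_k)|^2$, where $U_i$ denotes the length-$T$ DFT of $u_i$. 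Parseval's identity gives
\begin{equation*}
    \sum_k \sum_i |U_i(\omega_k)|^2 \lesssim T \sum_i \|u_i\|_2^2 \leq T \cdot m \cdot T^{(1 - 2/p)_+} \:,
\end{equation*}
where the second inequality uses the H\"older embedding $\|u_i\|_2^2 \leq T^{(1-2/p)_+}$ under $\|u_i\|_p \leq 1$. Averaging over the $r$ grid frequencies and retaining those with $\sum_i |U_i(\omega_k)|^2 \lesssim m \, r^{(1-2/p)_+}$ extracts a packing of size $M = \Omega(r)$ on which $\max_{k \neq j} \mathrm{KL} \lesssim (\delta^2/\sigma^2)\, m\, r^{(1-2/p)_+}$.

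\textbf{Assembly and main obstacle.} Setting the KL bound $\lesssim \log r$ gives $\delta^2 \lesssim \sigma^2 \log r / (m \, r^{(1-2/p)_+})$, and multiplying by the separation factor $\sqrt{r}$ recovers the claimed rate $\sigma\sqrt{r^{2/\max(p,2)} \log r / m}$ via the algebraic identity $1 - (1-2/p)_+ = 2/\max(p,2)$. The main technical obstacle is the Fourier-concentration estimate $\|\Phi_{u_i} g^{(k)}\|_2^2 \lesssim \delta^2 |U_i(\omega_k)|^2$: linear (as opposed to circular) convolution does not diagonalize on the DFT grid, so establishing it requires a careful analysis of the boundary samples where the windowed Dirichlet kernel has not yet concentrated near $\omega_k$. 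Secondary care is needed in the pigeonhole-pruning to ensure that the retained frequencies remain mutually separated on the grid, so that a packing of cardinality $\Omega(r)$ with $\mathcal{H}_\infty$-separation $\Omega(\delta\sqrt{r})$ survives and $\log M = \Omega(\log r)$.
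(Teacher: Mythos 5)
Your architecture (Fano with a packing of single-frequency FIR hypotheses on the $r$-point Fourier grid, separation certified by grid evaluation, KL controlled by the input energy at those frequencies, and a H\"older step producing the $r^{(1-2/p)_+}$ factor) is essentially the paper's, but the step you yourself flag as the crux is a genuine gap, and as stated it is not just delicate but false. The pointwise estimate $\|\Phi_{u_i}g^{(k)}\|_2^2 \lesssim \delta^2 |U_i(\omega_k)|^2$ cannot hold in general: take $u_i$ to be a length-$T$ sinusoid at the half-grid frequency $\omega_k + \pi/T$. Then $U_i(\omega_k)=0$ exactly, yet the frequency mismatch lies inside the main lobe (width $\sim 2\pi/r$) of the length-$r$ window, so the output energy is of the same order as in the matched case. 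Because the windowed spectrum has slowly decaying sidelobes and $|U_i|$ can vary on the scale $1/T$, any correct version must involve a weighted local average of $|U_i|^2$ near $\pm\omega_k$ rather than the single grid value, and your Parseval/pigeonhole step would then need the weights to sum to $O(1)$ across $k$ --- i.e., what you actually need is an aggregate bound over the whole grid, not a per-frequency one. (Minor additional issues: $\cos(2\pi kt/r)=\cos(2\pi(r-k)t/r)$ on integer $t$, so your cosine family has only about $r/2$ distinct members, and $k=0,r/2$ are degenerate; this only affects constants.)

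The paper's proof shows how to bypass the obstacle entirely. It packs with complex exponentials $g_k=\delta F^{-1}e_k$ (no cosine aliasing), gets separation $\geq\delta$ exactly by evaluating on the grid, and then, using the averaged-KL form of Fano (mutual information bounded by the average pairwise KL via joint convexity), only ever needs the quantity $\frac{1}{r}\sum_{k}\|\mathrm{Toep}_{T\times T}(F^{-1}e_k)u_i\|_2^2$. The key identity is that the full-grid Toeplitz Gram sum is exactly diagonal, $\frac{1}{r}\sum_{k}\mathrm{Toep}_{T\times T}(\varphi(z_k))^*\mathrm{Toep}_{T\times T}(\varphi(z_k))=\mathrm{diag}(r,\dots,r,r,r-1,\dots,1)=:M_r$, so all cross-frequency and truncation boundary terms cancel identically and the KL reduces to the quadratic form $u_i^*M_ru_i/r^2$. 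The H\"older-type work is then the elementary bound $A_p=\sup_{\|u\|_p\leq1}\sum_k\min(k,r)u_k^2$, equal to $r$ for $p\in[1,2]$ and $\lesssim r^{2(p-1)/p}$ for $p>2$, which plays the role of your $\|u_i\|_2^2\leq T^{(1-2/p)_+}$ step; the averaged-KL Fano also removes the need for your pigeonhole pruning. Your outline would close once the pointwise Fourier-concentration claim is replaced by this exact aggregate identity; as written, the central estimate is unproven and unprovable in its stated form.
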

In view of Theorem~\ref{thm:main_result_lower_bound}, we see that
the rates prescribed by Theorem~\ref{thm:main_result_ltwo} for the $\ell_2$-constrained case
and by Theorem~\ref{thm:main_result_linfty} for the $\ell_\infty$-constrained case
are minimax optimal up to constant factors.
We conclude by noting that our choice of $T = 2r$ is arbitrary.
Indeed, the same results hold for $T = \lceil (1+\varepsilon) r \rceil$
for any fixed $\varepsilon > 0$, which only a change in the constant
factors.

%!TEX root = sector.tex

\section{Related Work}

\subsection{Transfer function identification}

Estimating the transfer function of a linear time-invariant system from
input/output pairs has been studied in various forms in both the
controls literature~\cite{ljung92,ljung85} and the statistics literature~\cite{gerencser92,goldenshluger01,shibata80}, 
where it is closely related to estimating the coefficients of a stable
autoregressive (AR) process.
The main difference between our work and that of autoregressive estimation
is that we assume the noise process driving the system is chosen
by the practitioner (which we denote as the input to the system), 
and the stochastic component enters only during the output of the
system.
This simplifying assumption allows us to provide stronger non-asymptotic guarantees.
Also by making prior assumptions on the stability radius of the underlying
system, we circumvent the delicate issue of model order selection;
a similar assumption is made in \cite{goldenshluger01}.

Most closely related to our work is that of
Goldenshluger~\cite{goldenshluger98}, where he considers the problem of estimating
the impulse response coefficients of a stable SISO LTI system.
Goldenshluger provides upper and lower bounds on the $\ell_p$-error when the
residual between the estimate and the true coefficients is treated as
a sequence in $\ell_p$ for $p \in [1,\infty]$.
The main difference between Goldenshluger's setting and ours is that 
he restricts himself to the case when the input $u$ is $\ell_\infty$-constrained, 
and furthermore assumes only a single realization is available.
On the other hand, we make assumption that multiple independent
realizations of the system are available, which is reasonable in a controlled
laboratory setting. This assumption simplifies the analysis
and allows us to study more general $\ell_p$-constrained inputs.

\subsection{System identification}

We now turn our attention to system identification, where the classical
results can be found in \cite{ljung99}.
Sample complexity guarantees in the system identification literature often
require strong assumptions, which are difficult to verify. 
Most analyses are asymptotic and are based on the idea of \emph{persistence of
excitation} or \emph{mixing}
\cite{mcdonald2012nonparametric,vidyasagar2008learning}.  There has been some
progress in estimating the sample complexity of dynamical system identification
using machine learning tools~\cite{campi2002finite,vidyasagar2008learning}, but
such results typically yield pessimistic sample complexity bounds that are
exponential in the degree of the linear system or other relevant quantities.  

Two recent results provide polynomial sample complexity for identifying linear
dynamical systems. Shah et al.~\cite{shah2012} show that if certain frequency
domain measurements are obtained from a linear dynamical system, then the
system can be approximately identified by solving a second-order cone
programming problem. The degree of the estimated IIR system scales as
$(1-\rho(A))^{-2}$ where $\rho(A)$ denotes the stability radius.  Similarly,
Hardt et al.~\cite{hardt2016gradient} show that one can estimate an IIR system
from time domain observations with a number of measurements polynomial in
$(1-\rho(A))^{-2}$, under the assumption that the impulse response 
coefficients $\{g_k\}_{k \geq 0}$ satisfy the decay law $\abs{g_k} \leq C \rho(A)^k$,
where $C$ is considered a constant independent of the degree of the system.
In this work, we show that under the same decay assumption, a considerably
smaller FIR approximation with degree $\widetilde{O}((1-\rho(A))^{-1})$
suffices to complete many control design tasks. 

\subsection{Robust control}

Classical robust control literature focuses much of its effort on designing a
controller while taking into account fixed bounds on the uncertainty in the model.
There are numerous algorithms for controller synthesis under various
uncertainty specifications, such as coprime factor
uncertainty~\cite{mcfarlane92} or state-space uncertainty~\cite{packard93}.
However, there are only a few branches of the robust control literature that
couple identification to control design, and the identification procedure best
suited for a particular control synthesis scheme is usually not specified.

\subsection{\texorpdfstring{$\mathcal{H}_\infty$}{H-infinity} identification and gain estimation}

Most related to our work is the literature on $\mathcal{H}_\infty$
identification.  In this literature, noisy input/output data from an unknown
stable linear time-invariant (LTI) plant is collected in either the frequency
or time domain; the goal is often to estimate a model with low
$\mathcal{H}_\infty$ error.  For frequency domain algorithms, see
e.g.~\cite{helmicki91,hindi02}, and for time domain algorithms, see e.g.~\cite{chen93}.
A comprehensive review of this line of work is given by Chen and
Gu~\cite{chen00}.

The main difference between the $\mathcal{H}_\infty$ identification literature and our
work is that we assume a probabilistic noise model instead of worst-case
(adversarial), and we assume that our identification algorithm is allowed to
pick its inputs to the plant $G$.  As we will see, these simplifying
assumptions lead to simple algorithms, straightforward analysis, and
finite-time sample complexity guarantees.

Another related line of work is the use of the power
method~\cite{rojas2012analyzing,wahlberg2010non} for estimating the
$\mathcal{H}_\infty$-norm of an unknown SISO plant.  The key insight 
is that in the SISO case, a time-reversal trick can be applied to effectively
query the system $G^*\circ G$, where $G^*$ denotes the adjoint system. 
This approach is appealing, since the power
method is known to converge exponentially quickly to the leading eigenvector.
However, the leading factor in the convergence rate is the ratio of
$\lambda_1/\lambda_2$, and hence providing a finite-time guarantee of this
method would require a non-asymptotic analysis of the rate of convergence of
the \emph{second} singular value of finite sections of a Toeplitz operator.

\subsection{Norms of random polynomials}

A significant portion of our analysis relies on bounding the norms of random
trigonometric polynomials of the form $Q(z) = \sum_{k=0}^{r-1} \varepsilon_k z^{k}$.  The
study of the supremum norm of random finite degree polynomials was first
initiated by Salem and Zygmund~\cite{salem54}, who studied the setting where the
coefficients are drawn from a symmetric Bernoulli distribution supported on
$\{\pm 1\}$.  Later, Kahane~\cite{kahane94} proved that when the coefficients
are distributed as an isotropic Gaussian, then with probability at least $1 -
\delta$, $\norm{Q}_\infty \leq O(\sqrt{r\log(r/\delta)})$.  More recently,
Meckes~\cite{meckes07} extended this result to hold for independent
sub-Gaussian random variables by employing standard tools from probability in
Banach spaces.
In Section~\ref{sec:fir_sysid}, we extend these results to the case when the
coefficients follow a non-isotropic Gaussian distribution.  This is important
because it allows us to reduce the overall error of our estimate by using
non-isotropic covariance matrices from experiment design.

%!TEX root = sector.tex

\section{System Identification of Finite Impulse Responses}
\label{sec:fir_sysid}

Recall from Section~\ref{sec:intro:results} that we are given query access to
$G$ via the form
\begin{align*}
    Y_{u,T} = (g \ast u)_{k=0}^{T-1} + \xi \:, \:\: \xi \sim N(0, \sigma^2 I_{T}) \:,
\end{align*}
where $u \in
\mathcal{U}$ for some fixed set $\mathcal{U}$.  Therefore, the ratio of some
measure of the size of $\mathcal{U}$ to $\sigma$ serves as the signal-to-noise
(SNR) ratio for our setting.
In what follows, we will always assume $\mathcal{U}$ is a unit $\ell_p$-ball
for $p \in [1, \infty]$.

Fix a set of $m$ inputs $u_1, ..., u_m \in \mathcal{U}$.
Given a realization of $\{Y_{u_k,T}\}_{k=1}^{m}$,
we can estimate the first $T$ coefficients $\{g_k\}_{k=0}^{T-1}$
of $G(z) = \sum_{k=0}^{\infty} g_k z^{-k}$ via ordinary least-squares (OLS).
Calling the vector $Y := (Y_{u_1,T}, ..., Y_{u_m,T}) \in \R^{Tm}$,
it is straightforward to show that the least squares estimator
$\widehat{g}_{0:T-1}$ is given
by
\begin{align*}
  \widehat{g}_{0:T-1} := \begin{bmatrix}
    \widehat{g}_0 \\
    \widehat{g}_1 \\
    \vdots \\
    \widehat{g}_{T-1} \\
  \end{bmatrix}
  = (Z^\T Z)^{-1} Z^\T Y \:, \:\:
  Z := \begin{bmatrix} \Toep(u_1) \\ \vdots \\ \Toep(u_m) \end{bmatrix} \in \R^{Tm \times T} \:.
\end{align*}
Let us clarify the $\Toep(u)$ notation.
For a vector $u \in \R^T$, $\Toep(u)$ is the $T \times T$ lower-triangular
Toeplitz matrix where the first column is equal to $u$.
Later on, we will use the notation $\Toep_{a \times b}(u)$, where
$a,b$ are positive integers. This is to be interpreted as
the upper left $a \times b$ section of the semi-infinite
lower-triangular Toeplitz matrix form by treating $u$ as a zero-padded
sequence in $\R^\mathbb{N}$.

Above, we assume the matrix $Z^\T Z$ is invertible, which we will
ensure in our analysis.
From $\widehat{g}_{0:T-1}$, we form the estimated finite impulse response
$\widehat{G}_r$ for any $r \leq T$ as $\widehat{G}_r(z) := \sum_{k=0}^{r-1} \widehat{g}_k
z^{-k}$.  The Gaussian output noise assumption means that the error vector
$\widehat{g}_{0:T-1} - g_{0:T-1}$ is distributed $N(0, \sigma^2 (Z^\T Z)^{-1})$, and hence $\widehat{G}_r - G_r$ is equal in
distribution to the random polynomial $Q(z) = \sum_{k=0}^{r-1} \varepsilon_k
z^{-k}$ with $\varepsilon \sim N(0, \sigma^2 E_r (Z^\T Z)^{-1} E_r^\T)$,
where $E_r := \begin{bmatrix} I_r & 0_{r \times (T-r)} \end{bmatrix} \in \R^{r \times T}$.
Here, $G_r(z) := \sum_{k=0}^{r-1} g_k z^{-k}$ is the length-$r$ FIR
truncation of $G$.
Since the covariance matrix will play a critical role in our analysis to follow,
we introduce the notation
\begin{align}
  \Sigma(u) := \sum_{k=1}^{m} \Toep(u_k)^\T \Toep(u_k) \:, \label{eq:def_cov_matrix}
\end{align}
where $m$ will be clear from context.
We will also use the shorthand notation $[M]_{[r]}$,
to refer to the $r \times r$ matrix $E_r M E_r^\T$ for any $T \times T$ matrix $M$.

The roadmap for this section is as follows.
In Section~\ref{sec:sysid:concentration}, we characterize
the behavior of the random quantity $\norm{Q}_\infty$ as a function of the
covariance matrix $\sigma^2 \Sigma(u)^{-1}$ and the polynomial degree $r$.
Next, we study in Section~\ref{sec:sysid:aopt}
the problem of experiment design for choosing the best inputs $u_1, ...,
u_m$ to minimize the error $\norm{Q}_\infty$.
Using these results, we give upper bounds for FIR identification
with $\ell_p$-constrained inputs in Section~\ref{sec:sysid:upper_bounds}.
We then combine these results and prove in Section~\ref{sec:sysid:main_upper_bounds}
the main results from
Theorem~\ref{thm:main_result_ltwo} and
Theorem~\ref{thm:main_result_linfty}.
Finally, we prove the minimax risk lower bound from
Theorem~\ref{thm:main_result_lower_bound} in Section~\ref{sec:sysid:main_lower_bound}.

\paragraph{Process noise.}

Before we begin our analysis, we note that our upper bounds easily
extend to the case where process noise enters the system
through the same channel as the input.
Specifically, suppose the input signal is corrupted by $\zeta \sim N(0, \sigma_n^2 I_{T})$
which is independent of the output noise $\xi$,
and instead of observing $Y_{u,T}$ we observe
\begin{align*}
  \widetilde{Y}_{u,T} = (g \ast \widetilde{u})_{k=0}^{T-1} + \xi \:, \:\: \widetilde{u} := u + \zeta \:.
\end{align*}
In this setting, the error vector $\widehat{g}_{0:T-1} - g_{0:T-1}$ of the least-squares
estimator on $\{\widetilde{Y}_{u_k,T}\}_{k=1}^{m}$ is distributed
\begin{align*}
  N(0, \Lambda) \:, \:\: \Lambda := (Z^\T Z)^{-1} Z^\T( \sigma_n^2 \Toep_{T \times T}(g)\Toep_{T \times T}(g)^\T + \sigma^2 I_{T}) Z (Z^\T Z)^{-1} \:.
\end{align*}
Since $g$ is the impulse response of a stable system,
$\norm{\Toep_{T \times T}(g)} \leq \hinfnorm{G}$, and therefore
$\Lambda \preccurlyeq (\sigma_n^2 \hinfnorm{G}^2 + \sigma^2) (Z^\T Z)^{-1}$.
Thus, the upper bounds carry over to this process noise setting with
the variable substitution $\sigma^2 \gets \sigma_n^2 \norm{G}_\infty^2 + \sigma^2$.
The modification to the lower bounds in this setting is more delicate, and we leave
this to future work.

\subsection{A concentration result for the error polynomial}
\label{sec:sysid:concentration}

We first address the behavior of the error $\norm{Q}_\infty$.    Our main tool is
a discretization result from Bhaskar et al.~\cite{bhaskar12}:
\begin{lem}[Bhaskar et al.~\cite{bhaskar12}]
\label{lem:discretization}
Let $Q(z) := \sum_{k=0}^{r-1} \varepsilon_k z^{-k}$, where $\varepsilon_k \in \C$.
For any $N \geq 4\pi r$,
  \begin{align*}
    \norm{Q}_\infty \leq \left(1 + \frac{4\pi r}{N}\right) \max_{k=0, ..., N-1} \abs{ Q( e^{j2\pi k/N} )} \:.
  \end{align*}
\end{lem}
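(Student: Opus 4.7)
The plan is to prove this via a standard sampling argument built on Bernstein's inequality for trigonometric polynomials. Viewed on the unit circle, $\theta \mapsto Q(e^{j\theta}) = \sum_{k=0}^{r-1} \varepsilon_k e^{-jk\theta}$ is a trigonometric polynomial of degree $r-1$, and so its derivative is controlled by its sup norm. Combined with the mean value theorem, this lets us transfer the supremum on the continuous circle to the closest grid value with only a small multiplicative loss.

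Concretely, let $\theta^\ast \in [0, 2\pi)$ achieve $\norm{Q}_\infty = |Q(e^{j\theta^\ast})|$, write $Q(e^{j\theta^\ast}) = \norm{Q}_\infty e^{j\phi}$, and define the real-valued trigonometric polynomial
\[
  p(\theta) := \mathrm{Re}\!\left(e^{-j\phi} Q(e^{j\theta})\right).
\]
Then $p$ has degree at most $r-1$, satisfies $\norm{p}_\infty \leq \norm{Q}_\infty$, and attains $p(\theta^\ast) = \norm{Q}_\infty$. Pick the grid point $\theta_k := 2\pi k/N$ nearest to $\theta^\ast$, so that $|\theta^\ast - \theta_k| \leq \pi/N$. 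Bernstein's inequality for real trigonometric polynomials gives $\norm{p'}_\infty \leq (r-1)\norm{p}_\infty \leq (r-1)\norm{Q}_\infty$, hence by the mean value theorem
\[
  \norm{Q}_\infty = p(\theta^\ast) \leq p(\theta_k) + \norm{p'}_\infty \cdot \frac{\pi}{N} \leq \bigl|Q(e^{j\theta_k})\bigr| + \frac{(r-1)\pi}{N}\,\norm{Q}_\infty.
\]

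Writing $M := \max_{k=0,\ldots,N-1} |Q(e^{j 2\pi k/N})|$ and rearranging yields $\norm{Q}_\infty \leq M / \bigl(1 - (r-1)\pi/N\bigr)$. The hypothesis $N \geq 4\pi r$ forces $(r-1)\pi/N \leq 1/4$, and the elementary inequality $1/(1-x) \leq 1 + 4x$ valid for $x \in [0, 3/4]$ then gives
\[
  \frac{1}{1 - (r-1)\pi/N} \;\leq\; 1 + \frac{4(r-1)\pi}{N} \;\leq\; 1 + \frac{4\pi r}{N},
\]
which is the claimed bound.

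The only delicate point is the introduction of the auxiliary polynomial $p$: one cannot differentiate $|Q(e^{j\theta})|$ directly, so rotating by the phase $e^{-j\phi}$ to reduce to a real-valued trigonometric polynomial is essential for invoking Bernstein's inequality cleanly. Everything else is a routine application of the mean value theorem and a single scalar inequality.
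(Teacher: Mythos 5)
Your proof is correct. Note that the paper does not actually prove this lemma itself---it imports it from Bhaskar et al.~\cite{bhaskar12}---and your argument (pass to the nearest grid point, control the increment via Bernstein's inequality for degree-$(r-1)$ trigonometric polynomials together with the mean value theorem, then rearrange) is essentially the standard proof given in that reference; the phase-rotation to the real polynomial $p(\theta)=\mathrm{Re}(e^{-j\phi}Q(e^{j\theta}))$ is a clean way to avoid differentiating $|Q|$, and your constant bookkeeping, in particular $(r-1)\pi/N\le 1/4$ and $1/(1-x)\le 1+4x$ for $x\in[0,3/4]$, checks out.
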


Lemma~\ref{lem:discretization} immediately reduces
controlling the $\mathcal{H}_\infty$-norm of a finite-degree polynomial to
controlling the maxima of a finite set of points on the torus.
Hence, upper bounding the expected value of $\norm{Q}_\infty$ and showing
concentration is straightforward.
Before we state the result, we define some useful notation
which we will use throughout this section.
For a $z \in \C$, define the vector of monomials $\varphi(z)$ as
\begin{align}
    \varphi(z) := (1, z, z^2, ..., z^{r-1}) \in \C^r \:, \:\: \varphi_1(z) := \Re\{\varphi(z)\} \:, \:\: \varphi_2(z) := \Im\{\varphi(z)\} \:, \label{eq:monomials_definition}
\end{align}
where the length $r$ will be implicit from context.
\begin{lem}
\label{lem:concentration_of_error}
Let $\varepsilon \sim N(0, V)$ where $\varepsilon \in \R^r$, and put $Q(z) = \sum_{k=0}^{r-1} \varepsilon_k z^{-k}$.
Define for $\ell=1,2$,
\begin{align}
    \eta_\ell^2 := \sup_{z \in \mathbb{T}} \: \varphi_\ell(z)^\T V \varphi_\ell(z) \:. \label{eq:effective_variance}
\end{align}
We have that
\begin{align}
  \label{eq:expected_value_bound}
  \E\norm{Q}_\infty \leq 4\sqrt{2} \eta \sqrt{\log(8 \pi r)} \:,
\end{align}
where $\eta := \max(\eta_1, \eta_2)$.
Furthermore, with probability at least $1-\delta$, we have
  \begin{align}
      \label{eq:prob_bound}
      \norm{Q}_\infty \leq 4 \sqrt{2} \eta (\sqrt{\log(8\pi r)}  + \sqrt{\log(2/\delta)}) \:.
  \end{align}
\end{lem}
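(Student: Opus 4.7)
The plan is to combine the discretization bound of Bhaskar et al.\ (Lemma~\ref{lem:discretization}) with standard maximal inequalities for centered Gaussians. First I would apply Lemma~\ref{lem:discretization} with $N$ chosen to be the smallest integer at least $4\pi r$, so that $\norm{Q}_\infty \le (1 + 4\pi r/N) \max_{0 \le k < N} |Q(e^{j 2\pi k/N})|$ with the discretization prefactor bounded by $2$. This reduces the task to controlling the maximum modulus of $Q$ over a grid of $N = \Theta(r)$ points on the unit torus.

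Second, at each grid point $z_k = e^{j 2\pi k / N} \in \mathbb{T}$ I would split $Q(z_k)$ into real and imaginary parts. Since $z_k^{-j} = \overline{z_k^{j}}$ on $\mathbb{T}$, we have $Q(z_k) = \overline{\varphi(z_k)}^\T \varepsilon = \varphi_1(z_k)^\T \varepsilon - i\, \varphi_2(z_k)^\T \varepsilon$, so that $\Re Q(z_k) \sim N(0, \varphi_1(z_k)^\T V \varphi_1(z_k))$ and $\Im Q(z_k) \sim N(0, \varphi_2(z_k)^\T V \varphi_2(z_k))$, both with variance at most $\eta^2 = \max(\eta_1^2, \eta_2^2)$ by definition of the $\eta_\ell^2$. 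Using the pointwise inequality $|Q(z_k)| \le \sqrt{2}\, \max(|\Re Q(z_k)|, |\Im Q(z_k)|)$, the grid maximum is then dominated by $\sqrt{2}$ times the maximum absolute value of a family of $2N$ centered Gaussians, each with variance bounded by $\eta^2$.

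The final step is a routine Gaussian maximal inequality. For \eqref{eq:expected_value_bound} I would invoke the standard bound $\E \max_{i \le M} |Y_i| \le \eta\sqrt{2 \log(2M)}$ with $M = 2N$, then multiply by the earlier factors of $2$ (discretization) and $\sqrt{2}$ (real/imaginary decomposition), absorbing $\log(8N)$ into $\log(8\pi r)$ up to the advertised constant $4\sqrt{2}$ since $N = \Theta(r)$. For \eqref{eq:prob_bound}, the Gaussian tail bound $\Pr(|Y_i| > t) \le 2 \exp(-t^2/(2\eta^2))$ together with a union bound over the $2N$ variables yields, with probability at least $1 - \delta$, $\max_i |Y_i| \le \eta\sqrt{2 \log(4N/\delta)}$, and the subadditivity inequality $\sqrt{\log(4N/\delta)} \le \sqrt{\log(4N)} + \sqrt{\log(1/\delta)}$ cleanly separates the $r$- and $\delta$-dependence before being absorbed into the same prefactor.

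I do not foresee a substantial obstacle: the hard analytic work is done by the Bhaskar et al.\ discretization, and the remainder is concentration for linear functionals of a Gaussian vector. The only care needed is verifying that the suprema defining $\eta_1, \eta_2$ dominate the variances at every sampling point, which is immediate since $z_k \in \mathbb{T}$; the symmetric definition $\eta = \max(\eta_1, \eta_2)$ is tailored precisely so that the bookkeeping between $V$ and the sampled monomial vectors $\varphi_1(z_k), \varphi_2(z_k)$ remains transparent.
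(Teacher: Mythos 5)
Your proposal is correct and follows essentially the same route as the paper: reduce to a grid of $O(r)$ points via the Bhaskar et al.\ discretization, split into real and imaginary parts whose variances are controlled by $\eta_1^2,\eta_2^2$, and finish with Gaussian maximal inequalities (your constants work out since $\log(16\pi r)\le 2\log(8\pi r)$). The only cosmetic differences are that you use $\abs{Q}\le\sqrt{2}\max(\abs{\Re Q},\abs{\Im Q})$ where the paper uses the sum $\abs{\Re Q}+\abs{\Im Q}$, and you obtain the high-probability statement from an elementary union bound over Gaussian tails rather than the paper's appeal to concentration of suprema of Gaussian processes about the mean; both yield the stated bound.
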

\begin{proof}
Set $N = 4\pi r$ and invoke Lemma~\ref{lem:discretization}
to conclude that
    \begin{align*}
        \norm{Q}_\infty &\leq 2 \max_{k=0, ..., 4\pi r - 1} \abs{Q(e^{j k/2r})} \\
        &= 2 \max_{k=0, ..., 4\pi r - 1} \abs{\ip{\varphi(e^{j k/2r})}{\varepsilon}} \\
        &\leq 2 \max_{k=0, ..., 4\pi r - 1} \abs{\ip{\varphi_1(e^{j k/2r})}{\varepsilon}} + 2 \max_{k=0, ..., 4\pi r - 1} \abs{\ip{\varphi_2(e^{j k/2r})}{\varepsilon}} \:.
    \end{align*}
We first prove \eqref{eq:expected_value_bound}
by bounding $\E \max_{k=0, ..., 4\pi r - 1}\abs{\ip{\varphi_\ell(e^{j k/2r})}{\varepsilon}}$ for $\ell=1,2$.
For a fixed $k$, we have that
$\ip{\varphi_\ell(e^{j k/2r})}{\varepsilon} \sim N(0, \varphi_\ell(e^{j k/2r})^\T V \varphi_\ell(e^{j k/2r}))$.
By standard results for expected maxima of Gaussian random variables,
we have
    \begin{align*}
        \E \max_{k=0, ..., 4\pi r - 1}\abs{\ip{\varphi_\ell(e^{j k/2r})}{\varepsilon}} &\leq \max_{k=0, ..., 4\pi r - 1} \sqrt{\varphi_\ell(e^{j k/2r})^\T V \varphi_\ell(e^{j k/2r})} \sqrt{2\log(8\pi r)} \\
        &\leq \eta \sqrt{2 \log(8\pi r)} \:.
    \end{align*}
This yields \eqref{eq:expected_value_bound}.

For \eqref{eq:prob_bound}, using standard concentration results for
suprema of Gaussian processes (see e.g. \cite{boucheron16}), we have that with probability at least $1-\delta$,
    \begin{align*}
        \max_{k=0, ..., 4\pi r - 1}\abs{\ip{\varphi_\ell(e^{j k/2r})}{\varepsilon}} &\leq \E \max_{k=0, ..., 4\pi r - 1}\abs{\ip{\varphi_\ell(e^{j k/2r})}{\varepsilon}} + \eta \sqrt{2\log(1/\delta)} \\
        &\leq \eta \sqrt{2\log(8\pi r)} + \eta \sqrt{2\log(1/\delta)} \:.
    \end{align*}
The claim \eqref{eq:prob_bound} now follows from a union bound.
\end{proof}
Note that when $V = I$, $\eta^2 \leq r$ which recovers the known results
from \cite{kahane94} up to constants.
Furthermore, when $V$ is diagonal, $\eta^2 \leq \Tr(V)$.
We will exploit this result in the sequel.

\subsection{Experiment design}
\label{sec:sysid:aopt}

We now consider the problem of choosing a set of inputs $u \in \mathcal{U}$
in order to minimize the expected error of the residual polynomial.
Fixing the number of inputs $m$, the input constraint set $\mathcal{U}$,
and recalling the definition of the covariance $\Sigma(u)$ from \eqref{eq:def_cov_matrix},
the optimal experiment design problem is
\begin{align}
\label{eq:opt_design}
    \operatorname*{minimize}_{u_1, ..., u_m \in \mathcal{U}} \: \E_{\varepsilon \sim N(0, [\Sigma(u)^{-1}]_{[r]})} \norm{Q}_\infty \:.
\end{align}
In \eqref{eq:opt_design} and the sequel, if the covariance matrix $\Sigma(u)$ is not invertible then we assign the function value $+\infty$.
Problem \eqref{eq:opt_design} is difficult to solve as written because the expected value does not have
a form which is easy to work with computationally.
The following design problem provides a good approximation of \eqref{eq:opt_design}.
Let $\{z_1, ..., z_s\} \subseteq \mathbb{T}$ denote a grid of $s$ points on $\mathbb{T}$. Consider the problem
\begin{align}
\label{eq:opt_design_grid}
   \operatorname*{minimize}_{u_1, ..., u_m \in \mathcal{U}} \: \max_{\substack{1 \leq k \leq s \\ \ell=1,2}} \varphi_\ell(z_k)^\T [\Sigma(u)^{-1}]_{[r]} \varphi_\ell(z_k) \:.
\end{align}
The objective \eqref{eq:opt_design_grid} minimizes the maximum pointwise variance of $Q(z)$ over all points on the
grid $\{z_1, ..., z_s\}$.
If the grid is uniformly spaced and $s \geq 4\pi r$, then by Lemma~\ref{lem:discretization}
we can interpret \eqref{eq:opt_design_grid} as minimizing an upper bound
to the objective function in \eqref{eq:opt_design}, since
\begin{align}
  \E_{\varepsilon \sim N(0, [\Sigma(u)^{-1}]_{[r]})} \norm{Q}_\infty &\leq \left(1 + 4\pi r /s \right) \E \max_{1 \leq k \leq s} \abs{\ip{\varphi(z_k)}{\varepsilon}} \nonumber \\
    &\leq \left(1 + 4\pi r /s \right) \sqrt{2\log(2s)} \sum_{\ell=1}^{2} \max_{1 \leq k \leq s} \sqrt{\varphi_\ell(z_k)^\T [\Sigma(u)^{-1}]_{[r]} \varphi_\ell(z_k)} \label{eq:min_upper_bound} \:.
\end{align}
However, \eqref{eq:opt_design_grid} is non-convex in the $u_i$'s.
A convex version of the problem can be written by choosing $m_0$ inputs
$u_1, ..., u_{m_0} \in \mathcal{U}$ and solving the semidefinite program (SDP)
\begin{align}
\label{eq:opt_design_sdp}
    \operatorname*{minimize}_{\lambda \in \R^{m_0}} \: \max_{\substack{1 \leq k \leq s \\ \ell=1,2}} \varphi_\ell(z_k)^\T [\Sigma^{-1}]_{[r]} \varphi_\ell(z_k) \:\: \mathrm{s.t.} \:\: \Sigma = \sum_{i=1}^{m_0} \lambda_i \mathrm{Toep}(u_i)^\T\mathrm{Toep}(u_i) \:, \:\: \lambda^\T \ind = 1 \:, \:\: \lambda \geq 0 \:.
\end{align}
\eqref{eq:opt_design_sdp} is a convex program and can be solved with any off-the-shelf solver
such as MOSEK~\cite{mosek}.

We now study two special cases of $\mathcal{U}$ to show how
input constraints can affect design.
We first observe that when $\Sigma(u)$ is diagonal, continuing the estimates
from \eqref{eq:min_upper_bound}, we have the following
upper bound which holds since $\norm{\varphi_\ell(z)}_\infty \leq 1$,
\begin{align}
  \E_{\varepsilon \sim N(0, [\Sigma(u)^{-1}]_{[r]})} \norm{Q}_\infty \leq \left(1 + 4\pi r /s \right) 2\sqrt{2\log(2s)\Tr([\Sigma(u)^{-1}]_{[r]})}  \:. \label{eq:min_upper_bound_diag}
\end{align}
Even though \eqref{eq:min_upper_bound_diag} only holds when $\Sigma(u)$ is diagonal, it motivates us to
consider the standard $A$-optimal design problem
\begin{align}
\label{eq:opt_design_aopt}
    \operatorname*{minimize}_{u_1, ..., u_m \in \mathcal{U}} \: \Tr([\Sigma(u)^{-1}]_{[r]}) \:.
\end{align}
An advantage of \eqref{eq:opt_design_aopt} versus \eqref{eq:opt_design_grid}
is that the reduced complexity of the objective function
allows us to make statements about optimality when $\mathcal{U}$ is an $\ell_p$-ball.
The analogous SDP relaxation of \eqref{eq:opt_design_aopt}, similar to
\eqref{eq:opt_design_sdp}, is also more efficient to implement in practice
for more general $\mathcal{U}$'s.

Let $F^*_p(T, r)$ denote the optimal value of \eqref{eq:opt_design_aopt} with
$\mathcal{U} = B_p^T$ for $p\in [1, \infty]$, where $B_p^T$ denotes the unit
$\ell_p$-ball in $\R^T$.  We will always assume $T \geq r$.
It is not hard to show that $F^*_p(T, r)$ is finite and the value is
attained (and hence $\Sigma(u)$ at the optimum is invertible).
We will now show the following statements about \eqref{eq:opt_design_aopt}:
\begin{enumerate}[(a)]
  \item When $p \in [1,2]$, the optimal solution is to set $u_i = e_1$, $i=1, ..., m$.
  \item When $p \in (2, \infty]$, we can solve \eqref{eq:opt_design_aopt}
    to within a factor of two of optimal by convex programming.
  \item When $p = \infty$, we can give an exact closed form solution for
    \eqref{eq:opt_design_aopt} in the special case when
    $r = 2^n$ with $n \geq 0$, $T= 2^k r$ with $k \geq 1$,
		and $m$ is a multiple of $T$.
\end{enumerate}

\subsubsection{\texorpdfstring{$A$-optimal design for $\ell_p$-balls}{A-optimal design for l-p-balls}}
\label{sec:sysid:aopt_lp}

We first prove a lower bound on the optimal objective value $F^*_p(T, r)$.
To do this, we use the following linear algebra fact.
\begin{lem}
\label{lem:trace_diag_ineq}
Let $A$ be an $n \times n$ positive definite matrix. We have that
    \begin{align*}
        \Tr(A^{-1}) \geq \sum_{i=1}^{n} A_{ii}^{-1} \:.
    \end{align*}
\end{lem}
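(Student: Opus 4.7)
The plan is to reduce the matrix-level inequality to a scalar inequality that holds entry-by-entry on the diagonal, namely
\[
  (A^{-1})_{ii} \geq \frac{1}{A_{ii}} \quad \text{for each } i = 1, \ldots, n,
\]
and then simply sum over $i$ to recover the stated bound on $\Tr(A^{-1}) = \sum_i (A^{-1})_{ii}$.

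To establish the pointwise inequality, my preferred route is the Cauchy--Schwarz inequality in the inner product induced by $A$. Writing $e_i$ for the $i$-th standard basis vector, we have
\[
  1 = (e_i^\T e_i)^2 = \bigl( (A^{1/2} e_i)^\T (A^{-1/2} e_i) \bigr)^2
    \leq (e_i^\T A e_i)(e_i^\T A^{-1} e_i) = A_{ii} \cdot (A^{-1})_{ii},
\]
so dividing by $A_{ii} > 0$ yields the desired $(A^{-1})_{ii} \geq 1/A_{ii}$. A fully equivalent and perhaps more transparent alternative is to invoke the Schur complement: permuting so that index $i$ comes first, write
\[
  A = \begin{bmatrix} A_{ii} & b^\T \\ b & C \end{bmatrix},
\]
where $C$ is positive definite (being a principal submatrix of a positive definite matrix). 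The block-inversion formula gives $(A^{-1})_{ii} = (A_{ii} - b^\T C^{-1} b)^{-1}$, and since $b^\T C^{-1} b \geq 0$, the right-hand side is at least $1/A_{ii}$.

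There is no real obstacle here; the only subtlety is making sure the positive-definiteness of $A$ is used correctly so that $A_{ii} > 0$ (division is legal) and $C \succ 0$ (Schur complement bound is in the right direction). Summing over $i$ then gives $\Tr(A^{-1}) = \sum_i (A^{-1})_{ii} \geq \sum_i A_{ii}^{-1}$, which is exactly the claim. Equality holds if and only if $A$ is diagonal, a remark which is not needed for the present application but confirms the bound is tight.
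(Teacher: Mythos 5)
Your proof is correct, and it takes a genuinely different route from the paper. You reduce the claim to the entrywise bound $(A^{-1})_{ii} \geq 1/A_{ii}$, proved by Cauchy--Schwarz in the form $1 = (e_i^\T A^{1/2} A^{-1/2} e_i)^2 \leq (e_i^\T A e_i)(e_i^\T A^{-1} e_i)$ (or equivalently by the Schur complement formula), and then sum over $i$; both steps are valid, and your use of positive definiteness to ensure $A_{ii} > 0$ and $C \succ 0$ is correctly placed. The paper instead argues at the spectral level: by the Schur--Horn theorem the eigenvalues of $A$ majorize its diagonal, and since $x \mapsto 1/x$ is convex on $x>0$, Karamata's inequality gives $\Tr(A^{-1}) = \sum_i \lambda_i(A)^{-1} \geq \sum_i A_{ii}^{-1}$. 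Your argument is more elementary (no majorization machinery) and actually establishes the stronger entrywise comparison between $\diag(A^{-1})$ and $\diag(A)^{-1}$, which is all the paper needs; the paper's majorization route, on the other hand, generalizes immediately to $\Tr f(A) \geq \sum_i f(A_{ii})$ for any convex $f$, of which the present lemma is the special case $f(x) = 1/x$. Your closing remark on the equality case (iff $A$ is diagonal) is also correct, though not needed for the application.
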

\begin{proof}
This proof is due to Mateusz Wasilewski~\cite{wasilewski12}.

By the Schur-Horn theorem, we know that the eigenvalues of $A$ majorize the diagonal of $A$, i.e.
    \begin{align*}
        \sum_{i=1}^{k} A_{ii} \leq \sum_{i=1}^{k} \lambda_i(A) \:, \:\: k=1, ..., n \:, \:\: \sum_{i=1}^{n} A_{ii} = \sum_{i=1}^{n} \lambda_i(A) \:.
    \end{align*}
The function $x \mapsto 1/x$ is convex for $x > 0$.
This allows us to apply Karamata's inequality, from which the claim immediately follows.
\end{proof}
Lemma~\ref{lem:trace_diag_ineq} immediately yields
an optimization problem which lower bounds the optimal value $F^*_p$.
\begin{lem}
\label{lem:aopt_lower_bound}
For all $p \in [2, \infty]$, we have that
\begin{align}
    F^*_p(T, r) \geq \frac{1}{m} \inf_{\substack{w \in \R^T \\ \norm{w}_{p/2} \leq 1, w \geq 0}} \sum_{i=1}^{r} \left[ \sum_{\ell=1}^{T-r+i} w_\ell \right]^{-1} := \frac{1}{m} D_p(T, r) \:.
\end{align}
\end{lem}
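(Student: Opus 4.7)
The idea is to turn the matrix optimization \eqref{eq:opt_design_aopt} into a scalar one by lower bounding $\Tr([\Sigma(u)^{-1}]_{[r]})$ in terms of the diagonal entries of $\Sigma(u)$, then noting that the relevant diagonal sums are controlled by the $\ell_{p/2}$-norms of the squared input coordinates.

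\textbf{Step 1: diagonal lower bound.} First I would reduce to the diagonal of $\Sigma(u)$. Since the first $r$ diagonal entries of $\Sigma(u)^{-1}$ dominate $\Tr([\Sigma(u)^{-1}]_{[r]})$, and since for any positive definite $A$ the Cauchy--Schwarz inequality $1 = e_i^\T A^{1/2} A^{-1/2} e_i$ gives $e_i^\T A^{-1} e_i \geq 1/(e_i^\T A e_i)$, I obtain
\begin{align*}
\Tr([\Sigma(u)^{-1}]_{[r]}) = \sum_{i=1}^r e_i^\T \Sigma(u)^{-1} e_i \geq \sum_{i=1}^r \frac{1}{\Sigma(u)_{ii}} \:.
\end{align*}
(Alternatively, this is the $r$-term partial version of Lemma~\ref{lem:trace_diag_ineq}.)

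\textbf{Step 2: evaluate the diagonal.} A direct computation from $\Toep(u_k)$'s definition shows that column $i$ of $\Toep(u_k)$ contains the entries $u_{k,1},\ldots,u_{k,T-i+1}$, so that
\begin{align*}
  \Sigma(u)_{ii} = \sum_{k=1}^m \sum_{j=1}^{T-i+1} u_{k,j}^2 \:.
\end{align*}
Setting $w_j := \tfrac{1}{m}\sum_{k=1}^m u_{k,j}^2$, this becomes $\Sigma(u)_{ii} = m \sum_{j=1}^{T-i+1} w_j$. Re-indexing with $i' = r+1-i$ so that $T-i+1 = T-r+i'$ converts $\sum_{i=1}^r 1/\Sigma(u)_{ii}$ into the form appearing in the claim, up to the prefactor $1/m$.

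\textbf{Step 3: the constraint on $w$.} I still need to show that the $w$ obtained this way is feasible for the scalar program, i.e.\ $w \geq 0$ and $\|w\|_{p/2} \leq 1$. Nonnegativity is automatic. For the norm bound, note that for each $k$, the vector $a^{(k)} := (u_{k,j}^2)_{j=1}^T$ satisfies $a^{(k)} \geq 0$ and $\|a^{(k)}\|_{p/2}^{p/2} = \sum_j |u_{k,j}|^p = \|u_k\|_p^p \leq 1$, so $a^{(k)}$ lies in the nonnegative part of the unit $\ell_{p/2}$-ball (this is where $p \geq 2$ is used, so that $p/2 \geq 1$ and $\|\cdot\|_{p/2}$ is a norm making this set convex). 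Since $w = \tfrac{1}{m}\sum_k a^{(k)}$ is a convex combination, $w$ also lies in this convex set. Relaxing the minimization over $u$ to a minimization over all such $w$ can only decrease the objective, yielding exactly $D_p(T,r)/m$.

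\textbf{Anticipated obstacle.} The only subtle point is Step~3: it is essential that $p/2 \geq 1$ so that the unit $\ell_{p/2}$-ball is convex and the averaging step preserves feasibility, which explains the restriction $p \in [2,\infty]$ in the statement. Everything else is essentially bookkeeping, in particular the index reversal that matches the partial sums $\sum_{\ell=1}^{T-r+i} w_\ell$ to the computed diagonal entries $\Sigma(u)_{ii}$.
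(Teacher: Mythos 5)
Your proof is correct, and while it follows the same overall skeleton as the paper's argument (lower bound the trace by the reciprocals of the first $r$ diagonal entries of $\Sigma(u)$, evaluate those diagonals, then pass to a scalar program in $w$), it substitutes more elementary ingredients at the two key steps. Where the paper first invokes the block-inverse inequality $[M^{-1}]_{[r]} \succcurlyeq [M]_{[r]}^{-1}$ and then its Schur--Horn/Karamata lemma (Lemma~\ref{lem:trace_diag_ineq}) applied to the $r \times r$ block, you get the same intermediate bound $\Tr([\Sigma(u)^{-1}]_{[r]}) \geq \sum_{i=1}^{r} \Sigma(u)_{ii}^{-1}$ in one stroke from the entrywise Cauchy--Schwarz estimate $e_i^\T A^{-1} e_i \geq 1/(e_i^\T A e_i)$, which is arguably cleaner and avoids majorization altogether. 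For the reduction from $m$ inputs to a single weight vector, the paper asserts steps (d)--(e) ``by symmetry'' and reparameterization, which implicitly relies on a symmetrization/convexity argument; your version makes this explicit and, in fact, simpler: defining $w_j = \tfrac{1}{m}\sum_k u_{k,j}^2$ makes the objective identity exact (no Jensen step on the objective is needed), and the only place convexity enters is feasibility of $w$, via the triangle inequality for $\norm{\cdot}_{p/2}$, which is exactly where $p \geq 2$ is used --- a point you correctly flag. Two trivial remarks: your index reversal $i' = r+1-i$ matching $\sum_{\ell=1}^{T-i+1}$ to $\sum_{\ell=1}^{T-r+i}$ is the same silent reindexing the paper performs, and for $p = \infty$ the norm computation should be read as $\norm{a^{(k)}}_\infty = \norm{u_k}_\infty^2 \leq 1$ rather than through the exponent-$p/2$ identity, which changes nothing. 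Also note your argument only establishes the inequality $F^*_p(T,r) \geq \tfrac{1}{m} D_p(T,r)$, but that is all the lemma claims.
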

\begin{proof}
We have that
\begin{align*}
    F^*_p(T, r) &= \inf_{\substack{u_1, ..., u_m \in \R^T \\ \norm{u_i}_p \leq 1 \\ \det{\Sigma(u)} \neq 0}} \Tr([\Sigma(u)^{-1}]_{[r]})
          \stackrel{(a)}{\geq} \inf_{\substack{u_1, ..., u_m \in \R^T \\ \norm{u_i}_p \leq 1 \\ \det{\Sigma(u)} \neq 0}} \Tr( [\Sigma(u)]_{[r]}^{-1}    )
          \stackrel{(b)}{\geq} \inf_{\substack{u_1, ..., u_m \in \R^T \\ \norm{u_i}_p \leq 1 \\ \det{\Sigma(u)} \neq 0}} \sum_{i=1}^{r} \Sigma(u)_{ii}^{-1} \\
          &\stackrel{(c)}{\geq} \inf_{\substack{u_1, ..., u_m \in \R^T \\ \norm{u_i}_p \leq 1}} \sum_{i=1}^{r} \Sigma(u)_{ii}^{-1}
          = \inf_{\substack{u_1, ..., u_m \in \R^T \\ \norm{u_i}_p \leq 1}} \sum_{i=1}^{r} \left[ \sum_{k=1}^{m} \sum_{\ell=1}^{T-r+i} (u_k)^2_{\ell} \right]^{-1}
          \stackrel{(d)}{=} \frac{1}{m} \inf_{\substack{u \in \R^T \\ \norm{u}_p \leq 1}} \sum_{i=1}^{r} \left[ \sum_{\ell=1}^{T-r+i} u^2_{\ell} \right]^{-1} \\
          &\stackrel{(e)}{=} \frac{1}{m} \inf_{\substack{w \in \R^T \\ \norm{w}_{p/2} \leq 1, w \geq 0}} \sum_{i=1}^{r} \left[ \sum_{\ell=1}^{T-r+i} w_\ell \right]^{-1} \:.
\end{align*}
Above,
(a) follows since for any positive definite matrix $M$, $[M^{-1}]_{[r]} \succcurlyeq [M]_{[r]}^{-1}$
(see e.g. Equation (3.2.27) of~\cite{horn05}), combined with the fact that
trace is operator monotone,
(b) follows from Lemma~\ref{lem:trace_diag_ineq},
(c) follows since the infimum is over a larger set,
(d) follows by the symmetry of the objective function with respect to $u_1, ..., u_m$, and
(e) follows by a simple reparameterization.
\end{proof}
Before we proceed, a couple of remarks are in order.  First, it is clear that
$D_\infty(T,r) = H_{T} - H_{T-r} = \Theta(\log(T/(T-r)))$, where $H_n$ is the $n$-th
Harmonic number.  This is achieved by setting $w = \ind$.  Second, it is
straightforward to show (e.g. by the KKT conditions) that $D_2(T,r) = r$, which
is achieved by setting $w = e_1$. When $p \in (2, \infty)$, the optimal $u$
will be an interpolation between $w=\ind$ and $w=e_1$.  To the best of our
knowledge, there is no simple closed form formula for $w$ in the general case.
%It is, however, straightforward to upper bound $D_p(T, r) \leq T^{2/p}
%(H_{T} - H_{T-r}) $, by plugging in the feasible vector $w =
%\ind_{T}/\norm{\ind_{T}}_{p/2}$.  We will use this bound later in our
%analysis.
However, we prove the following upper and lower bound on $D_p(T,r)$.
\begin{lem}
\label{lem:dp_approx}
When $p > 2$, we have that
\begin{align*}
    (T-r+1)^{2/p} (H_{T} - H_{T-r}) \leq D_p(T, r) \leq T^{2/p} (H_{T} - H_{T-r}) \:.
\end{align*}
\end{lem}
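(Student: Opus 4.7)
The plan is to prove the upper bound by exhibiting a clean feasible point, and then to prove the lower bound via Hölder's inequality applied at the level of the partial sums $s_i := \sum_{\ell=1}^{T-r+i} w_\ell$, together with a crude monotonicity bound.

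For the upper bound, I would take the constant choice $w = T^{-2/p}\mathbf{1}$. This is feasible since $\|w\|_{p/2}^{p/2} = T \cdot T^{-1} = 1$, and it gives $s_i = T^{-2/p}(T-r+i)$, whence
\begin{equation*}
D_p(T,r) \leq \sum_{i=1}^{r} \frac{T^{2/p}}{T-r+i} = T^{2/p}(H_T - H_{T-r}).
\end{equation*}

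For the lower bound, fix any feasible $w$. Since $p > 2$, the conjugate exponent of $p/2$ is $p/(p-2)$, and Hölder's inequality gives
\begin{equation*}
s_i = \sum_{\ell=1}^{T-r+i} w_\ell \cdot 1 \leq \|w\|_{p/2} \cdot (T-r+i)^{(p-2)/p} \leq (T-r+i)^{1-2/p}.
\end{equation*}
Inverting and summing yields
\begin{equation*}
\sum_{i=1}^{r} \frac{1}{s_i} \geq \sum_{i=1}^{r} (T-r+i)^{2/p - 1} = \sum_{i=1}^{r} \frac{(T-r+i)^{2/p}}{T-r+i}.
\end{equation*}
Since $2/p > 0$ and $T-r+i \geq T-r+1$ for all $i \geq 1$, monotonicity of $x \mapsto x^{2/p}$ gives $(T-r+i)^{2/p} \geq (T-r+1)^{2/p}$, so
\begin{equation*}
\sum_{i=1}^{r} \frac{1}{s_i} \geq (T-r+1)^{2/p} \sum_{i=1}^{r} \frac{1}{T-r+i} = (T-r+1)^{2/p}(H_T - H_{T-r}).
\end{equation*}
Taking the infimum over feasible $w$ closes the argument.

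Nothing here is a serious obstacle, but the one place some care is warranted is the Hölder step: we need the bound on $s_i$ to be pointwise in $i$ (not on some aggregate), which is why we apply the inequality separately for each partial sum. The slight looseness between $(T-r+1)^{2/p}$ and $T^{2/p}$ in the two bounds is exactly the gap inherent in the Hölder estimate versus the constant choice of $w$, and it is sharp up to the factor $(T/(T-r+1))^{2/p}$, which is close to one whenever $T \gg r$.
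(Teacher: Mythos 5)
Your proof is correct and follows essentially the same route as the paper: the upper bound uses the identical normalized constant vector $w = T^{-2/p}\mathbf{1}$, and your pointwise H\"older bound $s_i \leq (T-r+i)^{1-2/p}$ is exactly the dual-norm evaluation the paper obtains after swapping the infimum and the sum, followed by the same step of pulling out the factor $(T-r+1)^{2/p}$.
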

\begin{proof}
Set $p' = p/2$ and let $q'$ denote its conjugate pair.
The upper bound follows by plugging in the feasible vector $w = \ind_{T}/\norm{\ind_{T}}_{p'}$.
The lower bound follows by the chain of inequalities
\begin{align*}
    D_p(T,r) &= \inf_{\substack{w \in \R^T \\ \norm{w}_{p'} \leq 1, w \geq 0}} \sum_{i=1}^{r} \left[ \sum_{\ell=1}^{T-r+i} w_\ell \right]^{-1}
    \geq \sum_{i=1}^{r} \inf_{\substack{w \in \R^T \\ \norm{w}_{p'} \leq 1, w \geq 0}} \left[ \sum_{\ell=1}^{T-r+i} w_\ell \right]^{-1} \\
    &= \sum_{i=1}^{r} \left[  \sup_{\substack{w \in \R^T \\ \norm{w}_{p'} \leq 1, w \geq 0}} \sum_{\ell=1}^{T-r+i} w_\ell \right]^{-1}
    = \sum_{i=1}^{r} \frac{1}{\norm{ \ind_{T-r+i} }_{q'}} \\
    &= \sum_{i=1}^{r} \left( \frac{1}{T-r+i} \right)^{1 - 1/p'}
    = \sum_{i=T-r+1}^{T} \left( \frac{1}{i} \right)^{1 - 1/p'} \\
    &= \sum_{i=T-r+1}^{T} \frac{1}{i} i^{1/p'} \geq (T-r+1)^{1/p'} \sum_{i=T-r+1}^{T} \frac{1}{i} \\
    &= (T-r+1)^{1/p'} (H_{T} - H_{T-r}) \:.
\end{align*}
\end{proof}
Lemma~\ref{lem:dp_approx} implies that if e.g. $T=2r$, then
$D_p(2r, r) = \Theta( r^{2/p} \log{r} )$.
We now use the lower bound on $F^*_2(T, r)$ to show that
in the regime $p \in [1,2]$, we can solve the problem \eqref{eq:opt_design_aopt}
exactly.
\begin{lem}
\label{lem:lp_one_two_exact}
For $p \in [1,2]$, we have that $F^*_p(T, r) = \frac{r}{m}$, which is achieved by setting
$u_1 = ... = u_m = e_1$.
\end{lem}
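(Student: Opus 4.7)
The plan is to sandwich $F^*_p(T,r)$ between $r/m$ from above and below.

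For the upper bound, I would simply exhibit the candidate $u_1 = \cdots = u_m = e_1$. Since $\norm{e_1}_p = 1$ for every $p \in [1, \infty]$, this choice is feasible. Because $\Toep(e_1) = I_T$, the covariance is $\Sigma(u) = m I_T$, and hence $[\Sigma(u)^{-1}]_{[r]} = (1/m) I_r$, whose trace equals $r/m$. This gives $F^*_p(T, r) \leq r/m$ for every $p$.

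For the matching lower bound in the range $p \in [1,2]$, the key observation is that on $\R^T$ the $\ell_p$-norms are monotonically non-increasing in $p$, so $\norm{x}_2 \leq \norm{x}_p$ whenever $p \leq 2$. Consequently $B_p^T \subseteq B_2^T$, and shrinking the feasible set of a minimization problem can only increase its optimal value:
\begin{align*}
  F^*_p(T, r) \geq F^*_2(T, r) \:.
\end{align*}
Now I would invoke Lemma~\ref{lem:aopt_lower_bound} at $p = 2$ (which is in its stated range $[2,\infty]$) to obtain $F^*_2(T, r) \geq D_2(T, r)/m$, and use the remark already made in the text that $D_2(T, r) = r$ (achieved by $w = e_1$, since each partial sum $\sum_{\ell=1}^{T-r+i} w_\ell$ is bounded above by $\norm{w}_1 \leq 1$, so the sum of reciprocals is at least $r$, with equality when $w = e_1$). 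Combining yields $F^*_p(T, r) \geq r/m$, which matches the upper bound and finishes the proof.

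There is no real obstacle here: the statement is essentially a corollary of Lemma~\ref{lem:aopt_lower_bound} together with the monotonicity of $\ell_p$-norms. The only small subtlety is noting that Lemma~\ref{lem:aopt_lower_bound} was stated for $p \in [2, \infty]$, so one must pass through $F^*_2$ rather than trying to apply the lemma directly at $p < 2$; the ball-inclusion step does this cleanly.
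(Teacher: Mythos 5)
Your proof is correct and follows essentially the same route as the paper: the upper bound comes from $u_1 = \cdots = u_m = e_1$ giving $\Sigma(u) = mI$ and trace $r/m$, and the lower bound comes from the inclusion $B_p^T \subseteq B_2^T$ for $p \in [1,2]$ together with Lemma~\ref{lem:aopt_lower_bound} at $p=2$ and the fact $D_2(T,r) = r$. The only cosmetic difference is that you justify $D_2(T,r) = r$ directly (each partial sum is at most $\norm{w}_1 \leq 1$), whereas the paper appeals to the KKT conditions for the same fact.
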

\begin{proof}
For $u_1 = ... = u_m = e_1$, it is immediate to verify that $\Sigma(u) = m I$, which
yields $\Tr(\Sigma(u)^{-1}) = \frac{r}{m}$.
Hence by Lemma~\ref{lem:aopt_lower_bound},
for the $p=2$ case, $\frac{r}{m} \geq F^*_2(T,r) \geq \frac{1}{m} D_2(T,r) = \frac{r}{m}$.
On the other hand, since $e_1$ is feasible for all $\ell_p$-balls and
since $B_p^T \subseteq B_2^T$ for $p \in [1,2]$,
$\frac{r}{m} \geq F^*_p(T, r) \geq F^*_2(T, r) = \frac{r}{m}$.
\end{proof}
The remainder of this section will focus on the regime when $p > 2$.
In this case, $D_p(T, r)$ is a convex program, and for each fixed $T,r,p$, one can quickly
solve for the optimal value numerically.
Our goal now is to use utilize this numerical solution for approximating
the experiment design problem.
Specifically, we show how we can choose $m$ inputs of length $T$
via convex programming such that
$\Tr([\Sigma(u)^{-1}]_{[r]}) = \frac{2}{m} D_p(T, r)$. Recall by Lemma~\ref{lem:aopt_lower_bound},
that $F^*_p(T, r) \geq \frac{1}{m} D_p(T, r)$. Hence, we can always recover a solution
that is within a factor of two of the optimal solution by convex programming.
The loss of a factor of two is due to the real-valued nature of the inputs:
if complex-valued inputs were allowed then our solution would be exact.

Before we can state the result, we need a few auxiliary lemmas and notation.
For two vectors $x,y \in \C^T$, we let the
notation $x \odot y = (x_1 y_1, ..., x_T y_T) \in \C^T$ denote
the vector formed by the element wise product of the entries.
The next lemma states that when we form a (Hermitian) covariance matrix
from weighted sinusoidal inputs, the resulting covariance matrix
has a simple diagonal structure.
\begin{lem}
\label{lem:general_toep_structure}
Fix positive integers $T, n$ with $n \geq T$.
Define $z_i = e^{2\pi j i / n}$ for $i=0, ..., n-1$.
Fix any weighting $w \in \C^T$.
We have that
\begin{align}
  M := \sum_{i=0}^{n-1} \Toep( w \odot \varphi(z_i) )^*\Toep( w \odot \varphi(z_i) ) = n \diag\left( \sum_{i=1}^{T} \abs{w_i}^2, \sum_{i=1}^{T-1} \abs{w_i}^2, ..., \abs{w_1}^2 \right) \:,
\end{align}
\end{lem}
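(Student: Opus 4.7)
The plan is a direct entry-by-entry computation that reduces to the orthogonality relation for $n$-th roots of unity. I would set $v^{(i)} := w \odot \varphi(z_i) \in \C^T$, so that the $k$-th entry is $v^{(i)}_k = w_k z_i^{k-1}$. Writing $\Toep(v^{(i)})_{c,a} = v^{(i)}_{c-a+1} \cdot \mathbf{1}\{c \geq a\}$ and expanding $M_{ab} = \sum_{i=0}^{n-1} \sum_{c=1}^{T} \overline{\Toep(v^{(i)})_{c,a}}\,\Toep(v^{(i)})_{c,b}$ gives
\begin{equation*}
M_{ab} = \sum_{c \geq \max(a,b)} \overline{w_{c-a+1}}\, w_{c-b+1} \sum_{i=0}^{n-1} \overline{z_i^{\,c-a}}\, z_i^{\,c-b}.
\end{equation*}

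Next I would exploit $|z_i| = 1$ to replace $\overline{z_i^{c-a}}$ by $z_i^{a-c}$, so the inner exponent collapses to $z_i^{a-b}$, which is independent of $c$. The standard orthogonality of characters then gives $\sum_{i=0}^{n-1} z_i^{\,a-b} = n\cdot \mathbf{1}\{n \mid (a-b)\}$. Since $1 \leq a, b \leq T$ and by hypothesis $n \geq T$, the divisibility condition holds iff $a = b$. This immediately yields $M_{ab} = 0$ for $a \neq b$, so $M$ is diagonal.

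For the diagonal, plugging $a = b$ into the displayed equation and reindexing $k = c - a + 1$ gives
\begin{equation*}
M_{aa} = n \sum_{c = a}^{T} |w_{c-a+1}|^2 = n \sum_{k=1}^{T-a+1} |w_k|^2,
\end{equation*}
which matches the claimed formula term by term.

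There is no real obstacle here — the entire argument is a routine Fourier-orthogonality computation. The only point requiring a moment of care is confirming that the assumption $n \geq T$ is exactly what is needed to rule out all nonzero multiples of $n$ in the range $|a-b| \leq T-1$, so that the off-diagonal entries vanish without further restriction on $w$.
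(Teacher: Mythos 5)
Your proof is correct and follows essentially the same route as the paper's: an entry-by-entry computation in which the off-diagonal terms vanish by orthogonality of the $n$-th roots of unity (the paper phrases this as a vanishing geometric series after first computing the Gram matrix for a single $z$, while you swap the sums and invoke the character-sum identity directly), with $n \geq T$ guaranteeing $|a-b| \leq T-1 < n$ so only $a=b$ survives. The diagonal computation matches the paper's as well.
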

\begin{proof}
To ease indexing notation, we zero index the coefficients of $w$ in the proof.
Define the matrix $M_z := \Toep(w \odot \varphi(z))^*\Toep(w \odot \varphi(z))$.
Let us compute the entries of $(M_z)_{k,\ell=0}^{T-1}$ for $z = e^{j \theta}$.
Assuming $k < \ell$, we have that
\begin{align*}
  (M_z)_{k,\ell} = \left(\sum_{m=0}^{T - (\ell + 1)} \overline{w}_{\ell-k+m} w_{m}\right)  e^{-j \theta(\ell-k)} \:.
\end{align*}
Now summing over the $z_i$'s,
\begin{align*}
  M_{k,\ell} = \sum_{i=0}^{n-1} (M_{z_i})_{k,\ell} &= \left(\sum_{m=0}^{T - (\ell + 1)} \overline{w}_{\ell-k+m} w_{m}\right)  \sum_{i=0}^{n-1} e^{-j \frac{2\pi j i}{n} (\ell - k)} \\
  &=  \left(\sum_{m=0}^{T - (\ell + 1)} \overline{w}_{\ell-k+m} w_{m}\right) \frac{1 - e^{-2\pi j(\ell-k)}}{1 - e^{- \frac{2\pi j}{n}(\ell-k)}} = 0 \:.
\end{align*}
Above, the last inequality holds because we assumed $n \geq T$.
Since $M_z$ is Hermitian,
$(M_z)_{\ell,k} = \overline{(M_z)_{k, \ell}} = 0$, and therefore
we have that $M$ is a diagonal matrix.
The diagonal entries are simply given by
\begin{align*}
  M_{kk} = n \sum_{i=1}^{T-k+1} \abs{w_i}^2 \:.
\end{align*}
The claim now follows.
\end{proof}
The weighting vector $w$ from Lemma~\ref{lem:general_toep_structure} will
be useful when we consider inputs constrained by general $\ell_p$-balls.
We now state a lemma which will allow us to work with purely real-valued
input signals by splitting the signals up into the real and imaginary parts,
at an expense of a factor of two. This is where our
sub-optimality enters in.
\begin{lem}
\label{lem:real_inputs}
Fix positive integers $m, n, T$, and suppose that $m = 2n$ and $n \geq T$.
Also fix any $w \in \R^T$.
Let $z_i = e^{2\pi j i/n}$ for $i=0, ..., n-1$.
Define the vectors $u_0, ..., u_{n-1}$ and $u_{n}, ..., u_{2n-1}$ as
\begin{align*}
  u_i = \Re\{w \odot \varphi(z_i)\} \:, \:\: u_{n + i} = \Im\{w \odot \varphi(z_i)\} \:, \:\: i = 0, ..., n-1 \:.
\end{align*}
We have that
\begin{align}
  \Sigma(u) = \frac{m}{2} \diag\left( \sum_{i=1}^{T} \abs{w_i}^2, \sum_{i=1}^{T-1} \abs{w_i}^2, ..., \abs{w_1}^2 \right) \:. \label{eq:cov_real_inputs}
\end{align}
\end{lem}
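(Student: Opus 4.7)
The plan is to lift the statement to the complex-valued setting already handled by Lemma~\ref{lem:general_toep_structure}. Since $w$ is real, linearity of $\Toep(\cdot)$ gives the decomposition
\[
\Toep(w \odot \varphi(z_i)) = \Toep(u_i) + j\,\Toep(u_{n+i}) \:,
\]
where both $\Toep(u_i)$ and $\Toep(u_{n+i})$ are real Toeplitz matrices. Writing $A_i := \Toep(u_i)$ and $B_i := \Toep(u_{n+i})$, the Hermitian product expands as
\[
\Toep(w \odot \varphi(z_i))^* \Toep(w \odot \varphi(z_i)) = A_i^\T A_i + B_i^\T B_i + j\bigl(A_i^\T B_i - B_i^\T A_i\bigr) \:.
\]

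The key step is then to sum this identity over $i = 0, \ldots, n-1$ and match real and imaginary parts. The real part of the sum collects exactly
\[
\sum_{i=0}^{n-1}\bigl(A_i^\T A_i + B_i^\T B_i\bigr) = \sum_{i=1}^{m} \Toep(u_i)^\T \Toep(u_i) = \Sigma(u) \:,
\]
using $m = 2n$, while the imaginary part is $\sum_{i=0}^{n-1}(A_i^\T B_i - B_i^\T A_i)$, a real skew-symmetric matrix. By Lemma~\ref{lem:general_toep_structure}, the full Hermitian sum equals $n\,\diag\bigl(\sum_{i=1}^{T}|w_i|^2, \sum_{i=1}^{T-1}|w_i|^2, \ldots, |w_1|^2\bigr)$, which is real diagonal. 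Matching imaginary parts forces the skew-symmetric piece to vanish, and matching real parts gives $\Sigma(u)$ equal to the asserted diagonal matrix, with the prefactor $n = m/2$ matching \eqref{eq:cov_real_inputs}.

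There is no substantive obstacle here; the only care required is bookkeeping in the decomposition $T^*T = \Re(T)^\T\Re(T) + \Im(T)^\T\Im(T) + j(\Re(T)^\T\Im(T) - \Im(T)^\T\Re(T))$ and the observation that the cross term is $j$ times a real skew-symmetric matrix, so that equating it to the real diagonal output of Lemma~\ref{lem:general_toep_structure} forces it to cancel. The hypothesis $n \geq T$ is used only implicitly, through its role in Lemma~\ref{lem:general_toep_structure}.
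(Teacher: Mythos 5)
Your proof is correct and is essentially the paper's argument: both rest on the identity $\Re\{X^*X\} = \Re\{X\}^\T\Re\{X\} + \Im\{X\}^\T\Im\{X\}$ (your expansion is exactly this plus the cross term), the fact that $\Toep(\cdot)$ commutes with taking real and imaginary parts, and a reduction to Lemma~\ref{lem:general_toep_structure}. The paper simply takes real parts of the Hermitian sum, so your extra remark that the skew-symmetric cross term must vanish is harmless but unnecessary; matching real parts alone already gives \eqref{eq:cov_real_inputs}.
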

\begin{proof}
We observe that for any complex matrix $X$,
\begin{align*}
\Re\{X^* X\} = \Re\{X\}^\T \Re\{X\} + \Im\{X\}^\T \Im\{X\} \:.
\end{align*}
Furthermore, $\Re\{\Toep(u)\} = \Toep(\Re\{u\})$
and similarly $\Im\{\Toep(u)\} = \Toep(\Im\{u\})$.
Hence,
\begin{align*}
  &\Re\{\Toep(w \odot \varphi(z_i))^* \Toep(w \odot \varphi(z_i))\} \\
    &\qquad= \Toep(\Re\{w \odot \varphi(z_i)\})^\T\Toep(\Re\{w \odot \varphi(z_i)\})
     + \Toep(\Im\{w \odot \varphi(z_i)\})^\T\Toep(\Im\{w \odot \varphi(z_i)\}) \:.
\end{align*}
Therefore, we have the identity
\begin{align*}
  \Sigma(u) = \Re\left\{ \sum_{i=0}^{n-1} \Toep(w \odot \varphi(z_i))^* \Toep(w \odot \varphi(z_i)) \right\} \:.
\end{align*}
The claim now follows from Lemma~\ref{lem:general_toep_structure}.
\end{proof}
We are finally ready to define the input ensemble which approximately solves the $A$-optimal
experimental design problem in the $p \in (2,\infty]$ regime.
For any $p \in (2, \infty]$, define the vector
$w_p(T, r) \in \R^T$ as any vector which achieves the infimum in
the optimization problem defining $D_p(T, r)$ before re-parameterization, i.e.
$w_p(T, r)$ satisfies $\norm{w_p(T, r)}_p \leq 1$ and
\begin{align*}
    \sum_{i=1}^{r} \left[ \sum_{\ell=1}^{T-r+i} (w_p(T,r))_\ell^2 \right]^{-1} = D_p(T, r) \:.
\end{align*}
Recall that $w_p(T, r)$ can be solved for numerically via convex optimization.
\begin{lem}
\label{lem:aopt_upper_bound}
Fix positive integers $m, n, T$, and suppose that $m = 2n$ and $n \geq T$.
Let $z_i = e^{2\pi j i/n}$ for $i=0, ..., n-1$.
Define the vectors $u_0, ..., u_{n-1}$ and $u_n, ..., u_{2n-1}$ as
\begin{align*}
    u_i = \Re\{w_p(T, r) \odot \varphi(z_i)\} \:, \:\: u_{n+i} = \Im\{w_p(T, r) \odot \varphi(z_i)\} \:, \:\: i = 0, ..., n-1 \:.
\end{align*}
We have that the covariance matrix $\Sigma(u)$ satisfies
\begin{align*}
    \Tr([\Sigma(u)^{-1}]_{[r]}) = \frac{2}{m} D_p(T, r) \:.
\end{align*}
\end{lem}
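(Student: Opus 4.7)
The plan is to derive the stated identity by a direct application of Lemma~\ref{lem:real_inputs} followed by a change of summation index. The construction of $u_0, \ldots, u_{2n-1}$ in the statement is exactly the construction appearing in Lemma~\ref{lem:real_inputs}, with the weighting vector $w$ instantiated as $w_p(T, r)$. Consequently, I obtain immediately that
\[
  \Sigma(u) = \frac{m}{2} \diag\!\left( \sum_{i=1}^{T} (w_p(T,r))_i^2, \; \sum_{i=1}^{T-1} (w_p(T,r))_i^2, \; \ldots, \; (w_p(T,r))_1^2 \right),
\]
so that $\Sigma(u)$ is positive and diagonal.

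Because $\Sigma(u)$ is diagonal, its inverse is diagonal with reciprocal entries, and the projection $[\Sigma(u)^{-1}]_{[r]} = E_r \Sigma(u)^{-1} E_r^\T$ simply picks off the first $r$ diagonal entries. Explicitly, for $k = 1, \ldots, r$, the $k$-th diagonal entry of $[\Sigma(u)^{-1}]_{[r]}$ equals $\frac{2}{m} \bigl[ \sum_{i=1}^{T-k+1} (w_p(T,r))_i^2 \bigr]^{-1}$.

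Summing over $k$ and pulling the factor of $2/m$ outside gives
\[
  \Tr([\Sigma(u)^{-1}]_{[r]}) = \frac{2}{m} \sum_{k=1}^{r} \left[ \sum_{i=1}^{T-k+1} (w_p(T,r))_i^2 \right]^{-1}.
\]
Finally, I perform the reindexing $j = r - k + 1$, which sends $k = 1, \ldots, r$ to $j = r, \ldots, 1$ and transforms $T - k + 1 = T - r + j$. The sum becomes $\sum_{j=1}^{r} [\sum_{i=1}^{T-r+j} (w_p(T,r))_i^2]^{-1}$, which by the defining property of $w_p(T,r)$ stated just before the lemma is precisely $D_p(T, r)$. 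This yields the claimed identity.

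There is no real obstacle here beyond careful bookkeeping: the main combinatorial step is matching the diagonal pattern produced by Lemma~\ref{lem:real_inputs} (which runs from $\sum_{i=1}^{T}$ down to a single term) with the pattern in the definition of $D_p(T,r)$ (which runs from $\sum_{i=1}^{T-r+1}$ up to $\sum_{i=1}^{T}$), and verifying that the two are related by the index reversal above.
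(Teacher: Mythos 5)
Your proof is correct and takes essentially the same route as the paper: a direct appeal to Lemma~\ref{lem:real_inputs} to get the diagonal covariance, then reading off the first $r$ diagonal entries of the inverse and reindexing ($j = r-k+1$, so $T-k+1 = T-r+j$) to match the defining property of $w_p(T,r)$ and hence $D_p(T,r)$. The only step the paper includes that you omit is the feasibility check $\norm{u_i}_p = \norm{\Re\{w_p(T,r)\odot\varphi(z_i)\}}_p \leq \norm{w_p(T,r)}_p \leq 1$ (and similarly for $u_{n+i}$), which, while not part of the displayed identity, is what makes the construction a valid $\ell_p$-constrained experiment design, so it is worth adding.
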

\begin{proof}
This is a direct consequence of Lemma~\ref{lem:real_inputs}.
First, we check that both $\norm{u_i}_p \leq 1$ and $\norm{u_{n+i}}_p \leq 1$.
To check $u_i$, note that
\begin{align*}
    \norm{u_i}_p = \norm{\Re\{ w_p(T, r) \odot \varphi(z_i) \}}_p \leq \norm{w_p(T, r) \odot \varphi(z_i)}_p = \norm{w_p(T, r)}_p \leq 1 \:.
\end{align*}
A similar calculation holds for $u_{n+i}$.
Hence from Lemma~\ref{lem:real_inputs}, specifically \eqref{eq:cov_real_inputs},
we have that the covariance matrix $\Sigma(u)$ is diagonal, invertible, and
satisfies $\Tr([\Sigma(u)^{-1}]_{[r]}) = \frac{2}{m} D_p(T, r)$.
\end{proof}
At this point we are nearly done, since we have shown that
\eqref{eq:opt_design_aopt} can be solved to within a factor of two.
We conclude this section by showing that
when $p=\infty$,
$r = 2^n$ with $n \geq 0$, $T= 2^k r$ with $k \geq 1$,
and $m$ is a multiple of $T$, we can
we can remove the factor of two sub-optimality and
exactly solve \eqref{eq:opt_design_aopt}.

\paragraph{Hadamard construction.}
Our construction is based on the Hadamard transform.
We will show that the optimal input vectors for $A$-design are $r$ orthogonal vectors in $\{-1,+1\}^{r}$.
We give a construction for these vectors in the following proposition.
\begin{prop}
\label{prop:orthogonal_vectors}
For $n=0,1,2,...$, there exists
$2^n$ vectors in $\{-1,+1\}^{2^n}$
that are orthogonal with respect to the standard $\ell_2$ inner product on $\R^{2^n}$.
\end{prop}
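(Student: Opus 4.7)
The plan is to proceed by induction on $n$ using Sylvester's classical recursive construction of Hadamard matrices. The base case $n=0$ is trivial: the single vector $(1) \in \{-1,+1\}^1$ is vacuously orthogonal to itself. For the inductive step, I would assume there exist $2^n$ pairwise orthogonal vectors in $\{-1,+1\}^{2^n}$, arrange them as the columns of a matrix $H_n \in \{-1,+1\}^{2^n \times 2^n}$ satisfying $H_n^\T H_n = 2^n I_{2^n}$, and then define
\begin{equation*}
    H_{n+1} := \begin{bmatrix} H_n & H_n \\ H_n & -H_n \end{bmatrix} \in \{-1,+1\}^{2^{n+1} \times 2^{n+1}} \:.
\end{equation*}

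The key step is to verify that $H_{n+1}^\T H_{n+1} = 2^{n+1} I_{2^{n+1}}$. This follows by direct block multiplication: the diagonal blocks of $H_{n+1}^\T H_{n+1}$ each equal $H_n^\T H_n + H_n^\T H_n = 2^{n+1} I_{2^n}$ by the inductive hypothesis, while the off-diagonal blocks equal $H_n^\T H_n - H_n^\T H_n = 0$. Since every entry of $H_{n+1}$ is $\pm 1$ by construction, its $2^{n+1}$ columns are the desired orthogonal vectors in $\{-1,+1\}^{2^{n+1}}$.

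I expect no substantive obstacles here: the result is the standard Sylvester construction, and the proof is a one-line block computation combined with induction. The only care required is to keep track of dimensions and to note that the entrywise $\pm 1$ property is preserved under the block construction, both of which are immediate. The proposition is really just setting up notation for the subsequent Hadamard-based experiment design, where these orthogonal $\pm 1$ vectors will be used as the $\ell_\infty$-optimal input ensemble.
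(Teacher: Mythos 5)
Your proposal is correct and is essentially the same argument as the paper's: the paper also inducts on $n$, doubling the collection by taking the vectors $\begin{bmatrix} u_k \\ u_k \end{bmatrix}$ and $\begin{bmatrix} u_k \\ -u_k \end{bmatrix}$, which are exactly the columns of your Sylvester matrix $H_{n+1}$. The only difference is notational (matrix form versus vector form), so there is nothing substantive to change.
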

\begin{proof}
We will induct on $n=0,1,2,...$, for which the base case $n=0$ holds with $u_0=1$. Assume we have $2^n$ orthogonal vectors, in $\{-1,+1\}^{2^n}$, denoted $\{u_k\}$. Then, the $2^{n+1}$ vectors
\begin{align*}
\{\tilde u_k\}:=\bigcup_{k=0}^{2^n-1}\left\{\begin{bmatrix}u_k\\u_k\end{bmatrix},\:\begin{bmatrix}u_k\\-u_k\end{bmatrix}\right\},
\end{align*}
which reside in $\{-1,+1\}^{2^{n+1}}$, are also orthogonal.
\end{proof}
\begin{lem}
\label{lem:orthogonal_vectors_M}
The constructed orthogonal vectors $\{u_k\}_{k=0}^{2^n-1}$ specified in Proposition~\ref{prop:orthogonal_vectors} satisfy
\begin{align*}
  \Sigma(u)=\sum_{k=0}^{2^n-1}\Toep{(u_k)}^\T \Toep{(u_k)} = 2^n \diag(2^n, 2^n-1, 2^n-2, \ldots, 1)\:.
\end{align*}
\end{lem}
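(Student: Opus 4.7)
Set $r = 2^n$ for brevity. The plan is to compute the entries of $\Sigma(u)$ directly from the Toeplitz structure and then collapse the sum over $k$ using the orthogonality of the $u_k$. First I would write out, for each $k$,
\begin{align*}
\left(\Toep(u_k)^\T \Toep(u_k)\right)_{i,j} = \sum_{\ell=\max(i,j)}^{r} (u_k)_{\ell-i+1} (u_k)_{\ell-j+1},
\end{align*}
which follows immediately from the definition of the lower-triangular Toeplitz matrix. Swapping the order of summation gives
\begin{align*}
\Sigma(u)_{i,j} = \sum_{\ell=\max(i,j)}^{r} \sum_{k=0}^{r-1} (u_k)_{\ell-i+1} (u_k)_{\ell-j+1}.
\end{align*}

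The key step is to identify the inner sum. Form the $r \times r$ matrix $U := [u_0 \; u_1 \; \cdots \; u_{r-1}]$. Proposition~\ref{prop:orthogonal_vectors} guarantees that the columns of $U$ are pairwise orthogonal, and since each entry of $u_k$ is $\pm 1$ we have $\|u_k\|_2^2 = r$. Therefore $U^\T U = r I_r$. Because $U$ is square, this implies $U U^\T = r I_r$ as well, i.e.
\begin{align*}
\sum_{k=0}^{r-1} (u_k)_a (u_k)_b = r \, \delta_{ab} \quad \text{for all } a, b \in \{1, \ldots, r\}.
\end{align*}

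Plugging this into the expression for $\Sigma(u)_{i,j}$: when $i \neq j$, the indices $\ell - i + 1$ and $\ell - j + 1$ are always distinct, so every term vanishes and $\Sigma(u)_{i,j} = 0$. When $i = j$, each inner sum equals $r$, and the outer sum has $r - i + 1$ terms, giving $\Sigma(u)_{i,i} = r(r - i + 1)$. Assembling the diagonal yields $\Sigma(u) = 2^n \diag(2^n, 2^n - 1, \ldots, 1)$, as claimed.

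There is no real obstacle here; the only slightly subtle point is the passage from $U^\T U = r I$ to $U U^\T = r I$, which is valid precisely because Proposition~\ref{prop:orthogonal_vectors} provides exactly $r$ vectors in $\mathbb{R}^r$ (i.e., $U$ is square). Once that identity is in hand, the rest is a direct computation from the Toeplitz form.
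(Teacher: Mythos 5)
Your proof is correct, and it takes a genuinely different route from the paper. The paper proves the lemma by induction on $n$, mirroring the recursive construction in Proposition~\ref{prop:orthogonal_vectors}: it tracks the inner products of shifted vectors $u_k^{[i]}\cdot u_k^{[j]}$ and shows that the off-diagonal sums cancel and the diagonal sums double correctly when passing from $2^n$ to $2^{n+1}$. You instead bypass the recursion entirely: from the entrywise formula $\bigl(\Toep(u_k)^\T\Toep(u_k)\bigr)_{i,j}=\sum_{\ell\ge\max(i,j)}(u_k)_{\ell-i+1}(u_k)_{\ell-j+1}$ you reduce everything to the row inner products of the square matrix $U=[u_0\;\cdots\;u_{r-1}]$, and the identity $U^\T U=rI_r\Rightarrow UU^\T=rI_r$ (valid exactly because $U$ is square, as you note) collapses the sum over $k$ to $r\,\delta_{ab}$, giving zero off the diagonal and $r(r-i+1)$ on it. Your argument is shorter, avoids induction, and is more general: it applies verbatim to any Hadamard-type family of $r$ orthogonal $\pm1$ vectors in $\R^r$, not just the Sylvester construction of Proposition~\ref{prop:orthogonal_vectors}, whereas the paper's proof is tied to that specific recursive structure (but in exchange is completely self-contained, needing nothing beyond the construction itself). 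Both arguments yield the same conclusion; yours makes the underlying reason (row orthogonality of a square column-orthogonal matrix) more transparent.
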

\begin{proof}
This follows from straightforward manipulations shown in Appendix~\ref{proof:lem_orthogonal_vectors_M}.
\end{proof}
Combining Lemma~\ref{lem:aopt_lower_bound} and
Lemma~\ref{lem:orthogonal_vectors_M} implies that the construction from Proposition~\ref{prop:orthogonal_vectors}
is optimal for \eqref{eq:opt_design_aopt}.

\subsection{Upper bounds on FIR identification with \texorpdfstring{$\ell_p$}{lp}-constrained inputs}
\label{sec:sysid:upper_bounds}

Combining the results from Section~\ref{sec:sysid:concentration}
and Section~\ref{sec:sysid:aopt},
we now prove an upper bound on length-$r$ FIR identification when the inputs are
$\ell_p$-constrained.
\begin{lem}
\label{lem:fir_upper_bounds}
Fix positive integers $m$ and $r$, and set $T = 2r$.
Consider the input ensemble
$u_1 = ... = u_m = e_1$ when $p \in [1,2]$, or the input ensemble
defined in Lemma~\ref{lem:aopt_upper_bound} when $p \in (2, \infty]$
(with additional restrictions on $m$, $r$ in this case).
Let $\widehat{G}_r$ denote the length-$r$ FIR estimate derived from
least-squares, and let $G_r$ denote the length-$r$ FIR truncation of $G$.
With probability at least $1-\delta$,
\begin{align*}
    \hinfnorm{\widehat{G}_r - G_r} \leq
        \begin{cases}
            4\sqrt{2} \sigma \sqrt{\frac{r}{m}} \left(\sqrt{\log(8\pi r)} + \sqrt{\log(2/\delta)}\right) &\text{if } p \in [1,2] \\
            8\sqrt{2\log{2}} \sigma \sqrt{\frac{r^{2/p}}{m}} \left(\sqrt{\log(8\pi r)} + \sqrt{\log(2/\delta)}\right)&\text{if } p \in (2, \infty] \:.
        \end{cases}
\end{align*}
\end{lem}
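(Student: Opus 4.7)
The plan is to view $\widehat{G}_r - G_r$ as the random polynomial $Q(z) = \sum_{k=0}^{r-1} \varepsilon_k z^{-k}$ with $\varepsilon \sim N(0, \sigma^2 [\Sigma(u)^{-1}]_{[r]})$ (as derived in the setup of Section~\ref{sec:fir_sysid}) and apply Lemma~\ref{lem:concentration_of_error} with $V = \sigma^2 [\Sigma(u)^{-1}]_{[r]}$. For each of the two input ensembles, the designs have been engineered so that the relevant covariance is diagonal; in that case the remark following Lemma~\ref{lem:concentration_of_error} gives the clean bound $\eta^2 \le \Tr(V) = \sigma^2 \Tr([\Sigma(u)^{-1}]_{[r]})$, since each entry of $\varphi_\ell(z)$ has modulus at most $1$ on $\mathbb{T}$. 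So the whole task reduces to plugging a trace bound on $[\Sigma(u)^{-1}]_{[r]}$ into the tail inequality \eqref{eq:prob_bound}.

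For the case $p \in [1,2]$, with $u_1 = \cdots = u_m = e_1$ one has $\Toep(e_1) = I_T$ and hence $\Sigma(u) = m I_T$, so $[\Sigma(u)^{-1}]_{[r]} = (1/m) I_r$ and $\Tr([\Sigma(u)^{-1}]_{[r]}) = r/m$. Thus $\eta \le \sigma\sqrt{r/m}$, and substituting into \eqref{eq:prob_bound} yields the claimed first bound directly.

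For the case $p \in (2,\infty]$, Lemma~\ref{lem:aopt_upper_bound} tells us that the designed ensemble makes $\Sigma(u)$ diagonal with $\Tr([\Sigma(u)^{-1}]_{[r]}) = (2/m) D_p(T,r)$. With $T = 2r$, I would invoke the upper-bound half of Lemma~\ref{lem:dp_approx} to get $D_p(2r,r) \le (2r)^{2/p}(H_{2r} - H_r)$, then use the elementary estimate $H_{2r} - H_r \le \int_r^{2r} dx/x = \log 2$ together with $2^{2/p} \le 2$ for $p \ge 2$ to conclude $D_p(2r,r) \le 2 r^{2/p}\log 2$. This gives $\eta \le 2\sigma\sqrt{(\log 2)\,r^{2/p}/m}$, and \eqref{eq:prob_bound} then delivers exactly the second bound with constant $4\sqrt{2}\cdot 2\sqrt{\log 2} = 8\sqrt{2\log 2}$.

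There is no real obstacle here; the lemma is essentially an assembly statement that threads the concentration inequality of Section~\ref{sec:sysid:concentration} through the explicit designs of Section~\ref{sec:sysid:aopt}. The only mildly delicate point is to remember to use the $\Tr(V)$ bound on $\eta^2$ (which requires $V$ diagonal) rather than the loose $\lambda_{\max}(V)\cdot r$ bound; both ensembles were constructed precisely so that diagonality holds, and once this is observed the rest is bookkeeping of constants.
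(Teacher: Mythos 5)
Your proposal is correct and follows essentially the same route as the paper: apply Lemma~\ref{lem:concentration_of_error} with $V = \sigma^2[\Sigma(u)^{-1}]_{[r]}$, use the diagonality of the designed covariances to bound $\eta^2 \le \Tr(V)$, compute $\Tr([\Sigma(u)^{-1}]_{[r]}) = r/m$ for the impulse ensemble, and bound it by $\tfrac{2}{m}D_p(2r,r) \le \tfrac{4\log 2}{m}r^{2/p}$ via Lemmas~\ref{lem:aopt_upper_bound} and~\ref{lem:dp_approx} for $p\in(2,\infty]$. The constant bookkeeping matches the paper's exactly.
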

\begin{proof}
From Lemma~\ref{lem:concentration_of_error}, we have that
with probability at least $1-\delta$,
\begin{align*}
    \hinfnorm{\widehat{G}_r - G_r} \leq 4\sqrt{2} \sigma \sqrt{\Tr([\Sigma(u)^{-1}]_{[r]})} \left(\sqrt{\log(8\pi r)} + \sqrt{\log(2/\delta)}\right) \:.
\end{align*}
We just need to upper bound the variance term $\Tr([\Sigma(u)^{-1}]_{[r]})$.
When $p\in[1,2]$, we know by Lemma~\ref{lem:lp_one_two_exact} that the optimal
input ensemble is the impulse response $u_i = e_1$, so we have
$\Tr([\Sigma(u)^{-1}]_{[r]}) \leq r/m$.
On the other hand, when $p \in (2, \infty]$, we know by Lemma~\ref{lem:aopt_upper_bound} that
the specified $u$'s satisfy
\begin{align*}
    \Tr([\Sigma(u)^{-1}]_{[r]}) \leq \frac{2}{m} D_p(2r, r) \leq \frac{4}{m} r^{2/p} (H_{2r}-H_{r}) \leq \frac{4\log{2}}{m} r^{2/p} \:.
\end{align*}
Above, the upper bound on $D_p(2r, r)$ follows from Lemma~\ref{lem:dp_approx}.
\end{proof}

\subsection{Proof of upper bounds for main result}
\label{sec:sysid:main_upper_bounds}

We now prove Theorem~\ref{thm:main_result_ltwo}
and Theorem~\ref{thm:main_result_linfty}.
Recall that $\widehat{G}_r$ is the estimated length-$r$ FIR approximation to $G$,
and $G_r$ is the true length-$r$ FIR truncation of $G$.
By the triangle inequality, we have the following error decomposition
into an approximation error and an estimation error
\begin{align}
    \norm{G - \widehat{G}_r}_\infty \leq \underbrace{\norm{G - G_r}_\infty}_{\text{Approx. error.}} + \underbrace{\norm{G_r - \widehat{G}_r}_\infty}_{\text{Estimation error.}} \:.
\end{align}
Hence, in order for $\norm{G - \widehat{G}_r}_\infty \leq \varepsilon$ to
hold, it suffices to have both the approximation error $\norm{G-G_r}_\infty
\leq \varepsilon/2$ and the estimation error $\norm{G_r -
\widehat{G}_r}_\infty \leq \varepsilon/2$.

The approximation error is a deterministic quantity, and its behavior
is governed by the tail decay of the impulse response coefficients $\{g_k\}_{k \geq 0}$.
In Section~\ref{sec:truncation}, we prove in Lemma~\ref{lem:bounding_C} that
as long as $r$ satisfies
\begin{align*}
  r \geq \inf_{\rho < \gamma < 1} \: \frac{1}{1-\gamma} \log\left( \frac{2\hinfnorm{G(\gamma z)}}{ \varepsilon (1-\gamma)} \right) \:,
\end{align*}
then we have $\norm{G-G_r}_\infty \leq \varepsilon/2$.

We now turn our attention to the estimation error.
For the case when $p=2$,
Lemma~\ref{lem:fir_upper_bounds}
tells us with probability at least $1-\delta$,
the estimation error satisfies
\begin{align*}
    \norm{G_r - \widehat{G}_r}_\infty \leq 4\sqrt{2}\sigma\sqrt{\frac{r}{m}} \left(\sqrt{\log(8\pi r)} + \sqrt{\log(2/\delta)}\right) \:.
\end{align*}
Setting the RHS less than $\varepsilon/2$ and solving for $m$, combining with the inequality $(a+b)^2 \leq 2(a^2 + b^2)$, we conclude
that a sufficient condition on $m$ is
\begin{align*}
    m \geq \max\left\{\frac{256 \sigma^2 r}{\varepsilon^2} \left(\log(8\pi r) + \log\left(\frac{2}{\delta}\right) \right) , \: 1 \right\} \:.
\end{align*}
This concludes the proof of Theorem~\ref{thm:main_result_ltwo}.

The proof of Theorem~\ref{thm:main_result_linfty} is nearly identical.
For the case when $p=\infty$,
Lemma~\ref{lem:fir_upper_bounds}
tells us with probability at least $1-\delta$,
the estimation error satisfies
\begin{align*}
    \norm{G_r - \widehat{G}_r}_\infty \leq 8\sqrt{2\log{2}} \sigma \sqrt{\frac{1}{m}} \left(\sqrt{\log(8\pi r)} + \sqrt{\log(2/\delta)}\right) \:.
\end{align*}
Setting the RHS less than $\varepsilon/2$ and solving for $m$,
we conclude that the sufficient condition is
\begin{align*}
  m \geq \max\left\{ \frac{ (1024 \log{2}) \sigma^2}{\varepsilon^2} \left(\log(8\pi r) + \log\left(\frac{2}{\delta}\right) \right) , \: 4r \right\} \:.
\end{align*}
This concludes the proof of Theorem~\ref{thm:main_result_linfty}.

\subsection{Proof of lower bounds for main result}
\label{sec:sysid:main_lower_bound}

We will slightly relax the estimation setting to work
with complex inputs and complex systems which will simplify the proof.
The relaxation is as follows.
Recall that $\mathscr{H}_r$ is the space of length-$r$ FIR filters,
\begin{align*}
  \mathscr{H}_r = \left\{ G(z) = \sum_{k=0}^{r-1} g_k z^{-k} : (g_k)_{k=0}^{r-1} \in \C^r \right\} \:.
\end{align*}
We fix a true $G \in \mathscr{H}_r$
parameterized by $g \in \C^r$,
and we fix $m$ signals $u_1, ..., u_m \in \C^T$ with $\norm{u_i}_p \leq 1$ for
all $i = 1, ..., m$ and $T = 2r$.
Our estimator is given a realization of the random variable $Y = (Y_1, ..., Y_m) \in \otimes_{i=1}^{m} \C^T$, where
\begin{align*}
  Y \sim \otimes_{i=1}^{m} N( \Toep_{T \times T}(g) u_i, \sigma^2 I_T ) \:.
\end{align*}
An estimator is any measurable function
$\widehat{G} : \otimes_{i=1}^{m} \C^T \longrightarrow \mathscr{H}_r$.

Let us clarify what is meant by $Y \sim N(\mu, \sigma^2 I)$ where $\mu$ is a complex vector.
We mean that the random variable $Y$ has distribution equal to
$Y = (\Re(\mu) + \sigma w_1) + j(\Im(\mu) + \sigma w_2)$ where $w_1,w_2$ are independent
$N(0, I)$ random vectors. We can equivalently treat this as the
$N((\Re(\mu), \Im(\mu)), \sigma^2 I)$ distribution over $\R^{2T}$, which means that
for $\mu_1, \mu_2 \in \C^T$,
\begin{align}
  D_{KL}( N(\mu_1, \sigma^2 I), N(\mu_2, \sigma^2 I) ) = \frac{\sigma^2}{2} \bignorm{ \begin{bmatrix} \Re(\mu_1) \\ \Im(\mu_1) \end{bmatrix} - \begin{bmatrix} \Re(\mu_2) \\ \Im(\mu_2) \end{bmatrix}}^2_{\R^{2T}} = \frac{\sigma^2}{2} \norm{\mu_1 - \mu_2}^2_{\C^{T}} \:, \label{eq:kl_div}
\end{align}
where $D_{KL}(\cdot, \cdot)$ denotes the KL-divergence.
In the sequel, we will drop the norm subscripts as they will be clear from context.

We now establish some preliminaries. First
letting $z_k = e^{2\pi j k/r}$ for $k=0, ..., r-1$ and
denoting $\varphi(z_k)$ as the length-$r$ vector of monomials,
we define the $T \times T$ matrix $M_r$ as
\begin{align}
    M_r := \frac{1}{r} \sum_{k=0}^{r-1} \Toep_{T \times T}(\varphi(z_k))^*\Toep_{T \times T}(\varphi(z_k)) \:. \label{eq:Mr}
\end{align}
It turns out this matrix will also play a central role in the lower bound analysis.
To aid the analysis, we have the following identity for $M_r$.
\begin{prop}
We have that
\begin{align}
    M_r = \diag(\underbrace{r, ..., r}_{r \textrm{ times}}, r, r-1, ..., 1) \:.
\end{align}
\end{prop}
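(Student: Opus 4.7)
The plan is to compute the $(k,\ell)$ entry of $M_r$ directly, by expanding the Toeplitz product, using the zero-padding of $\varphi(z_s)$ (which lives in $\C^r$ but is embedded into $\C^T$ with $T = 2r$), and then applying orthogonality of $r$-th roots of unity across the index $s$.

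Concretely, writing indices from $0$, we have $[\Toep_{T \times T}(u)]_{i,j} = u_{i-j}$ when $i \geq j$ and $0$ otherwise, with $u$ extended by zeros past index $r-1$. Taking $u = \varphi(z_s)$ so that $u_m = z_s^m$ for $0 \le m \le r-1$ and $u_m = 0$ for $m \geq r$, a direct expansion yields
\[
[\Toep_{T \times T}(\varphi(z_s))^* \Toep_{T \times T}(\varphi(z_s))]_{k,\ell} = z_s^{\,k-\ell}\, c(k,\ell),
\]
where $c(k,\ell)$ is the number of indices $i$ satisfying $\max(k,\ell) \le i \le T-1$, $\ i - k \le r-1$, and $\ i - \ell \le r-1$. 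The key observation is that $c(k,\ell)$ does not depend on $s$, so averaging commutes cleanly with the scalar phase $z_s^{k-\ell}$.

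Summing over $s = 0,\ldots,r-1$ and dividing by $r$, the discrete orthogonality $\tfrac{1}{r}\sum_{s=0}^{r-1} z_s^{k-\ell}$ equals $1$ when $r \mid (k-\ell)$ and $0$ otherwise. Since $k,\ell \in \{0,\ldots,2r-1\}$, the only values of $k-\ell$ divisible by $r$ are $0$ and $\pm r$, so $[M_r]_{k,\ell}$ is supported on the main diagonal and on the two $r$-shifted diagonals. A short check handles the off-diagonal case: if $|k-\ell| = r$, then the constraints $i \ge \max(k,\ell) = \min(k,\ell) + r$ and $i - \min(k,\ell) \le r-1$ are inconsistent, so $c(k,\ell) = 0$. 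Hence $M_r$ is diagonal.

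The diagonal entries are $c(k,k) = |\{i : k \le i \le \min(k + r - 1,\, 2r - 1)\}|$, which equals $r$ for $0 \le k \le r$ and $2r - k$ for $r+1 \le k \le 2r-1$, exactly matching the claimed pattern. I do not expect any serious obstacle, as the proof is purely a bookkeeping computation; the only noteworthy point is that one cannot directly invoke Lemma~\ref{lem:general_toep_structure}, because the number of frequencies $r$ does not satisfy the $n \geq T$ hypothesis there, so the zero-padding of $\varphi(z_s)$ must be handled explicitly, and it is precisely this zero-padding that kills the would-be off-diagonal contributions at $|k - \ell| = r$.
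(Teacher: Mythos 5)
Your proof is correct and follows essentially the same route as the paper: compute each $\Toep_{2r\times 2r}(\varphi(z))^*\Toep_{2r\times 2r}(\varphi(z))$ entrywise as a phase $z^{k-\ell}$ times a count independent of $z$, kill the off-diagonals with $0<|k-\ell|<r$ by orthogonality of the $r$-th roots of unity, observe that the zero-padding makes the $|k-\ell|\geq r$ entries vanish outright, and read off the diagonal counts. Your explicit remark that Lemma~\ref{lem:general_toep_structure} cannot be invoked verbatim (since the number of frequencies $r$ is less than $T=2r$) is a fair clarification of what the paper glosses as "similar to" that lemma, but the substance of the argument is identical.
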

\begin{proof}
The proof is similar to Lemma~\ref{lem:general_toep_structure}.
For $z = e^{j\theta}$ define $M_z := \Toep_{2r \times 2r}(\varphi(z))^*\Toep_{2r \times 2r}(\varphi(z))$.
When $0 \leq k < \ell \leq 2r-1$,
\begin{align*}
  (M_z)_{k,\ell} = \begin{cases} (\min(2r, k+r) - \ell) e^{-j \theta (\ell - k)} &\text{if } \ell - k < r \\
    0 &\text{o.w.}
  \end{cases} \:.
\end{align*}
Therefore, for $k < \ell$ and $\ell - k < r$,
\begin{align*}
  \sum_{i=1}^{r} (M_{z_i})_{k,\ell} = (\min(2r,k+r)-\ell) \sum_{i=0}^{r-1} e^{-j 2\pi j i(\ell-k)/r} = 0 \:,
\end{align*}
and when $k < \ell$ and $\ell - k \geq r$, $\sum_{i=1}^{r} (M_{z_i})_{k,\ell} = 0$ trivially.
Therefore, $M_r$ is a diagonal matrix.
The diagonal entries are easily computed.
\end{proof}
In light of this identity, we will be interested in upper bounds on the quantity
\begin{align}
  A_p := \sup_{\norm{u}_p \leq 1} u^* M_r u = \sup_{ \norm{u}_p \leq 1 } \sum_{k=1}^{T} \min(k, r) u_k^2 \:. \label{eq:AP}
\end{align}
We have the following calculation which establishes the desired upper bounds.
\begin{prop}
\label{prop:AP_bound}
When $p \in [1,2]$, we have
\begin{align*}
  A_p = r \:.
\end{align*}
Furthermore, when $p \in (2, \infty]$, we have the upper bound
\begin{align*}
  A_p \leq 4r^{2(p-1)/p} \:.
\end{align*}
\end{prop}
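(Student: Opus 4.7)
The plan is to handle the two cases separately. For $p \in [1,2]$ the bound is immediate from monotonicity of $\ell_p$-norms. For $p \in (2,\infty]$ the natural tool is Hölder's inequality applied to $\sum_k \min(k,r) u_k^2$, viewing $(u_k^2)$ in $\ell_{p/2}$ and the weights $(\min(k,r))$ in the conjugate $\ell_{(p/2)'}$.

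First I would dispose of the easy case. For $p \in [1,2]$, since $T = 2r$ we have $\min(k,r) \leq r$ for every $k$, hence
\begin{align*}
\sum_{k=1}^T \min(k,r) u_k^2 \leq r\norm{u}_2^2 \leq r\norm{u}_p^2 \leq r,
\end{align*}
where we used that $\norm{\cdot}_p$ is non-increasing in $p$ on $\R^T$. For the matching lower bound, take $u = e_r$; then $\norm{u}_p = 1$ and the sum equals $\min(r,r) \cdot 1 = r$. This gives $A_p = r$.

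Next, for $p \in (2, \infty]$, set $q = p/2 > 1$ with conjugate exponent $q' = p/(p-2) \in [1, \infty)$ (interpret $q' = 1$ when $p = \infty$). Hölder's inequality gives
\begin{align*}
\sum_{k=1}^{T} \min(k,r) u_k^2 \leq \bignorm{(\min(k,r))_{k=1}^{T}}_{q'} \cdot \bignorm{(u_k^2)_{k=1}^{T}}_{q} = \bignorm{(\min(k,r))_{k=1}^{T}}_{q'} \cdot \norm{u}_p^2.
\end{align*}
Taking the supremum over $\norm{u}_p \leq 1$ yields $A_p \leq \norm{(\min(k,r))_{k=1}^{2r}}_{q'}$. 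The remaining step is a routine estimate:
\begin{align*}
\sum_{k=1}^{2r} \min(k,r)^{q'} = \sum_{k=1}^{r} k^{q'} + r \cdot r^{q'} \leq r \cdot r^{q'} + r^{q'+1} = 2 r^{q'+1},
\end{align*}
so $\norm{(\min(k,r))_{k=1}^{2r}}_{q'} \leq 2^{1/q'} r^{1 + 1/q'} \leq 2 r^{1+1/q'}$. Finally substitute $1/q' = (p-2)/p$ to get $1 + 1/q' = 2(p-1)/p$, which gives $A_p \leq 2 r^{2(p-1)/p} \leq 4 r^{2(p-1)/p}$, as required.

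There is no real obstacle here; the only item to be careful about is keeping track of exponents and the boundary case $p = \infty$ (where $q' = 1$ and the bound $\sum \min(k,r) \leq 2r^2$ matches the claimed $4r^2$). One may also check tightness (up to constants) in the $p = \infty$ case by taking $u = \ind$, which gives $\sum \min(k,r) = r(r+1)/2 + r^2 = \Theta(r^2)$, confirming the exponent in the upper bound is sharp.
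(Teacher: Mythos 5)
Your proof is correct and follows essentially the same route as the paper: both cases reduce to a duality/H\"older argument in $\ell_{p/2}$ applied to the weighted sum $\sum_k \min(k,r)\,u_k^2$, with the weight-vector norm then estimated elementarily. The only cosmetic difference is that you keep the weights $\min(k,r)$ and sum them directly, whereas the paper first relaxes $\min(k,r)\leq k$ and bounds $\norm{(1,\dots,T)}_{p/(p-2)}$ by an integral comparison; your version even yields the slightly sharper constant $2r^{2(p-1)/p}$.
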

\begin{proof}
First, we treat the case when $p \in [1,2]$.
By setting $u = e_T$, we have that $A_p \geq r$.
Now, by H{\"{o}}lder's inequality,
\begin{align*}
    \sum_{k=1}^{T} \min(k, r) u_k^2 \leq r \norm{u}_2^2 \leq r \:,
\end{align*}
where the last inequality holds since $\norm{u}_2 \leq \norm{u}_p \leq 1$.

Now, we treat the case when $p \in (2, \infty]$.
First, we use the upper bound
\begin{align*}
    A_p \leq \sup_{\norm{u}_p \leq 1} \sum_{k=1}^{T} k u_k^2 := B_p \:.
\end{align*}
We now bound $B_p$ by the duality of $\ell_p$ and $\ell_q$ norms.
We write $\norm{u}_p^p$ as
\begin{align*}
  \norm{u}_p^p = \sum_{k=1}^{T} \abs{u_k}^p = \sum_{k=1}^{T} \abs{\abs{u_k}^2}^{p/2} \:.
\end{align*}
Making a change of variables $w_k \gets \abs{u_k}^2$,
we have by the duality of $\ell_p$ and $\ell_q$ norms that $B_p$ is equivalent to
\begin{align*}
  B_p = \max_{\norm{w}_{p/2} \leq 1} \ip{(1, 2, ..., T)}{w} = \norm{(1, 2, ..., T)}_{\frac{1}{1 - 2/p}} \:.
\end{align*}
Immediately, we can read off that $B_2 = \norm{(1, 2, ..., T)}_\infty = T$
and $B_\infty = \norm{(1, 2, ..., T)}_1 = T(T+1)/2$.
Let us now handle the case when $p \in (2, \infty)$.
\begin{align*}
  B_p &= \norm{(1, 2, ..., T)}_{\frac{1}{1-2/p}} \\
	&= \left( \sum_{k=1}^{T} k^{\frac{p}{p-2}} \right)^{1-2/p} \\
  &\leq \left(  \int_1^{T+1} x^{\frac{p}{p-2}} \; dx \right)^{1-2/p} \\
  &\leq \left( \frac{p-2}{2(p-1)} (T+1)^{2(p-1)/(p-2)}  \right)^{1-2/p} \\
  &= \left( 2^{p/(p-2)} \frac{p-2}{p-1} T^{2(p-1)/(p-2)}  \right)^{1-2/p} \\
  &= \left( \frac{p-2}{p-1} \right)^{(p-2)/p} T^{2(p-1)/p} \:.
\end{align*}
Hence, we have that $B_p \leq \left( \frac{p-2}{p-1} \right)^{(p-2)/p} T^{2(p-1)/p}$ when $p \in (2, \infty)$.
Notice that when $p \searrow 2$ or $p \nearrow \infty$ we have that
this bound approaches $T$ and $T^2$, respectively, which is correct up to constants.
The bound now follows since
$\sup_{p > 2} \left( \frac{p-2}{p-1} \right)^{(p-2)/p} = 1$ and $T = 2r$.
\end{proof}
Finally, we establish a simple lower bound on the
$\Hinf$-norm.
For what follows, let $F \in \C^{r \times r}$
be the un-normalized Discrete Fourier Transform (DFT) matrix
(i.e. $F^{-1} = \frac{1}{r} F^*$).
\begin{prop}
\label{prop:dft_lower_bound}
For any $g \in \C^r$, we have that
\begin{align}
  \sup_{z \in \mathbb{T}} \bigabs{ \sum_{k=0}^{r-1} g_k z^{-k} } \geq \norm{Fg}_\infty \:. \label{eq:hinf_lower_bound}
\end{align}
\end{prop}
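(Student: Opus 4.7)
The proof is essentially a direct calculation: evaluating the Laurent polynomial $G(z) = \sum_{k=0}^{r-1} g_k z^{-k}$ at the $r$-th roots of unity exactly recovers the DFT of $g$, and these roots of unity all lie on $\mathbb{T}$, so the supremum on the left dominates the maximum over this finite subset.

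More concretely, my plan is to set $\omega_k := e^{2\pi j k / r}$ for $k = 0, 1, \ldots, r-1$ and observe that each $\omega_k$ lies on the unit circle $\mathbb{T}$. Then I would compute
\begin{align*}
G(\omega_k) = \sum_{\ell=0}^{r-1} g_\ell \, \omega_k^{-\ell} = \sum_{\ell=0}^{r-1} g_\ell \, e^{-2\pi j k \ell / r} = (Fg)_k,
\end{align*}
which is precisely the $k$-th entry of $Fg$ under the convention $F_{k\ell} = e^{-2\pi j k \ell / r}$ used in the excerpt. Taking absolute values and maximizing over $k$ yields
\begin{align*}
\max_{k=0,\ldots,r-1} |G(\omega_k)| = \max_{k=0,\ldots,r-1} |(Fg)_k| = \|Fg\|_\infty.
\end{align*}
Since $\{\omega_0, \ldots, \omega_{r-1}\} \subseteq \mathbb{T}$, the supremum over all of $\mathbb{T}$ is at least as large as this finite maximum, yielding \eqref{eq:hinf_lower_bound}.

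There is no real obstacle here; the only subtlety is confirming the convention for $F$ (i.e.\ verifying that the sign in the exponent matches the $z^{-k}$ in the definition of $G$, and that $F^{-1} = \frac{1}{r} F^*$ is consistent with an unnormalized matrix whose entries are $e^{-2\pi j k\ell/r}$). Once that alignment is made, the bound is immediate by pointwise evaluation on the $r$-th roots of unity.
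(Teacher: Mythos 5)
Your proposal is correct and follows essentially the same route as the paper: the paper's proof also lower-bounds the supremum by the maximum over any $r$ points of $\mathbb{T}$ and then takes $\theta_m = 2\pi m/r$, i.e.\ the $r$-th roots of unity, so that the evaluated values are exactly the entries of $Fg$. (Your remark about the sign convention is harmless here, since either convention for $F$ evaluates $G$ at some set of $r$-th roots of unity and the maximum modulus is unchanged.)
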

\begin{proof}
For any $r$ set of points $z_0, ..., z_{r-1} \in \mathbb{T}$
with $z_m = e^{j\theta_m}$,
\begin{align*}
  \sup_{z \in \mathbb{T}} \bigabs{ \sum_{k=0}^{r-1} g_k z^{-k} } &\geq \max_{m=0, ..., r-1} \bigabs{ \sum_{k=0}^{r-1} g_k z_m^{-k} } \:.
\end{align*}
The result now holds by choosing $\theta_m := 2\pi m/r$.
\end{proof}

We are now in a position to prove
Theorem~\ref{thm:main_result_lower_bound}.
Our approach is based on Fano's inequality combined
with a reduction to multi-way hypothesis testing, which is
one of the standard approaches in statistics for establishing
minimax risk lower bounds (see e.g.~\cite{yu97}).

Recall that $F \in \C^{r \times r}$ is the un-normalized DFT matrix.
Fix a $\delta > 0$,
and define $r$ systems $G_1, ..., G_r$ with parameters $g_k := \delta F^{-1} e_k$,
where $e_k$ is the $k$-th standard basis vector.
Using \eqref{eq:hinf_lower_bound}, for $k \neq \ell$,
\begin{align*}
  \norm{G_k - G_\ell}_\infty \geq \delta \norm{F (F^{-1} e_k - F^{-1} e_\ell)}_\infty = \delta \norm{e_k - e_\ell}_\infty = \delta \:.
\end{align*}
Now let $\Pr_{g_k} := \otimes_{i=1}^{m} N( \Toep_{T \times T}(g_k) u_i, \sigma^2 I_T)$.
Define the random variable $J$ to be uniform on the set $\{1, ..., r\}$, and
the random variable $Z$ with conditional distribution $Z|J{=}k \sim \Pr_{g_k}$.
By Fano's inequality, we have that
\begin{align}
  R := \inf_{\widehat{G}} \sup_{G \in \mathscr{H}_r} \E\hinfnorm{\widehat{G} - G} \geq \frac{\delta}{2} \left[ 1 - \frac{I(Z;J) + \log{2}}{\log{r}} \right] \:,
\end{align}
where $I(Z;J)$ is the mutual information between $Z$ and $J$.
The remainder of the proof involves
choosing a $\delta$ such that
$\frac{I(Z;J) + \log{2}}{\log{r}} \leq 1/2$, from which we conclude $R \geq \delta/4$.

By joint convexity of relative entropy and the fact that $x \mapsto \Toep(x)$ is a linear operator,
letting $k, \ell$ be independent random indices
distributed uniformly on $\{1, ..., r\}$, we have that
\begin{align*}
  I(Z;J) &\leq \E_{k,\ell}[ D_{KL}(\Pr_{g_k}, \Pr_{g_\ell}) ] \\
  &= \frac{\delta^2}{2\sigma^2} \sum_{i=1}^{m} \E_{k,\ell}[ \norm{\Toep_{T \times T}(F^{-1} (e_k-e_\ell))u_i}^2_2 ] \\
  &\leq \frac{2\delta^2}{\sigma^2} \sum_{i=1}^{m} \E_{k}[ \norm{\Toep_{T \times T}(F^{-1} e_k)u_i}^2_2 ] \:,
\end{align*}
where the first equality holds from \eqref{eq:kl_div}
and the last inequality uses $\norm{x+y}^2 \leq 2(\norm{x}^2 + \norm{y}^2)$
which holds for any norm $\norm{\cdot}$ and all vectors $x,y$.
Now we need to upper bound the expectation $\E_k[\norm{\Toep_{T \times T}(F^{-1} e_k) u_i}^2_2]$ for all $i=1, ..., m$.
If we let $f_0^*, ..., f_{r-1}^*$ denote the rows of $F$,
the columns of $F^{-1}$ are $\frac{1}{r} f_0, ..., \frac{1}{r} f_{r-1}$.
Furthermore, each $f_k = \varphi(z_k)$ for $z_k = e^{2\pi j k/r}$.
Using the linearity of $x \mapsto \Toep_{T \times T}(x)$
and the definition of $M_r$ \eqref{eq:Mr} and $A_p$ \eqref{eq:AP},
\begin{align*}
  \E_k[\norm{\Toep_{T \times T}(F^{-1} e_k)u_i}^2_2] &= \frac{1}{r} \sum_{k=1}^{r} u_i^* \Toep_{T \times T}(F^{-1} e_k)^*\Toep_{T \times T}(F^{-1} e_k) u_i \\
  &= \frac{1}{r} u_i^* \left(\sum_{k=1}^{r} \Toep_{T \times T}(F^{-1} e_k)^*\Toep_{T \times T}(F^{-1} e_k) \right) u_i \\
  &= \frac{1}{r} u_i^* \left(\sum_{k=0}^{r-1} \Toep_{T \times T}(r^{-1} f_k)^*\Toep_{T \times T}(r^{-1} f_k) \right) u_i \\
  &= \frac{1}{r} u_i^* \left(\frac{1}{r^2} \sum_{k=0}^{r-1} \Toep_{T \times T}(f_k)^*\Toep_{T \times T}(f_k) \right) u_i \\
  &= \frac{1}{r^2} u_i^* M_r u_i \\
  &\leq \frac{1}{r^2} A_p \:.
\end{align*}
Therefore, the mutual information $I(Z;J)$ is bounded above by
\begin{align*}
  I(Z;J) \leq \frac{2\delta^2 m}{\sigma^2 r^2} A_p \:.
\end{align*}
Setting $\delta = \sqrt{\frac{\sigma^2}{8} \frac{r^2 \log{r}}{A_p m} }$,
and using our assumption that $r \geq 16$, it is straightforward to check that
\begin{align*}
  \frac{I(Z;J) + \log{2}}{\log{r}} \leq \frac{ \frac{2\delta^2 m}{\sigma^2 r^2} A_p  + \log{2} }{\log{r}} \leq \frac{1}{2} \:.
\end{align*}
Hence we conclude that
\begin{align*}
  R \geq \frac{\sigma}{8\sqrt{2}} \sqrt{\frac{r^2 \log{r}}{A_p m}} \:.
\end{align*}
By Proposition~\ref{prop:AP_bound}, when $p \in [1,2]$ we have $A_p = r$, in which case
\begin{align*}
  R \geq \frac{\sigma}{8\sqrt{2}} \sqrt{\frac{r \log{r}}{m}} \:.
\end{align*}
On the other hand, when $p \in (2, \infty]$, we have $A_p \leq 4r^{2(p-1)/p}$, in which case
\begin{align*}
  R \geq \frac{\sigma}{16\sqrt{2}} \sqrt{\frac{r^{2/p} \log{r}}{m}} \:.
\end{align*}
The concludes the proof of Theorem~\ref{thm:main_result_lower_bound}.

%!TEX root = sector.tex

\section{Finite Truncation Error Analysis for Stable Systems}
\label{sec:truncation}

In Section~\ref{sec:fir_sysid}, we presented both probabilistic guarantees and
experiment design for identification of FIR systems of length $r$, which were
independent of any system specific properties of $G$.  In this section, we
analyze how system behavior affects the necessary truncation length needed to
reach a desired approximation error tolerance.

In order to provide guarantees, we require that the underlying system $G$ is
stable with stability radius $\rho \in (0, 1)$.  A standard fact states that
stability is equivalent to the existence of a constant $C > 0$ such that the
tail decay on the coefficients of the Laurent expansion $G =
\sum_{k=0}^{\infty} g_k z^{-k}$ satisfies the following condition
\begin{align}
    \abs{g_k} \leq C \rho^k \:, \:\: k \geq 1 \:. \label{eq:assumption_tail_decay}
\end{align}
Under this assumption, a simple calculation reveals that as long as
\begin{align*}
    r \geq \frac{1}{1-\rho} \log\left( \frac{C}{\varepsilon(1-\rho)} \right) \:,
\end{align*}
then we have that the approximation error $\norm{G - G_r}_\infty$ satisfies
$\norm{G - G_r}_\infty \leq \varepsilon$.

Unfortunately, without more knowledge of the system at hand, a bound on $C$ in
\eqref{eq:assumption_tail_decay} is hard to characterize.  However, by slightly
relaxing the decay condition \eqref{eq:assumption_tail_decay}, we are able to
derive a tail bound using system-theoretic ideas.
Intuitively, if a system has long transient behavior, then we expect the
constant $C$ in \eqref{eq:assumption_tail_decay} to be large, since in order to
obtain a small approximation error one needs to capture the transient behavior.
The next lemma shows that the $\Hinf$-norm provides a sufficient
characterization of this transient behavior.
This result is due to Goldenshluger and Zeevi~\cite{goldenshluger01}.
We include the proof for completeness.
\begin{lem}[Lemma 1, Goldenshluger and Zeevi~\cite{goldenshluger01}]
\label{lem:bounding_C}
Let $G(z) = \sum_{k=0}^{\infty} g_k z^{-k}$ be a stable SISO LTI system
with stability radius $\rho \in (0, 1)$.
Fix any $\gamma$ satisfying $\rho < \gamma < 1$.
Then for all $k \geq 1$,
  \begin{align*}
      \abs{g_k} \leq \hinfnorm{G(\gamma z)} \gamma^{k} \:.
  \end{align*}
\end{lem}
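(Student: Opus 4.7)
The plan is to exploit the fact that the dilation $G(\gamma z)$ is itself a stable transfer function with a simple relationship between its Laurent coefficients and those of $G$, and then invoke a standard Cauchy-integral bound on individual Laurent coefficients.

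First, I would write down the Laurent expansion of the dilated system. Since $G(z) = \sum_{k=0}^{\infty} g_k z^{-k}$, a direct substitution yields
\begin{align*}
  G(\gamma z) = \sum_{k=0}^{\infty} g_k (\gamma z)^{-k} = \sum_{k=0}^{\infty} (g_k \gamma^{-k}) z^{-k},
\end{align*}
so the $k$-th Laurent coefficient of $G(\gamma z)$ is exactly $g_k \gamma^{-k}$. Because $\gamma > \rho$, the series converges on $|z| \geq 1$ and the function $G(\gamma z)$ is analytic there; in particular $\hinfnorm{G(\gamma z)}$ is finite and well-defined.

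Next I would apply the classical fact that, for any function $H(z) = \sum_{k \geq 0} h_k z^{-k}$ analytic on an open neighborhood of $\{|z| \geq 1\}$, each coefficient satisfies $|h_k| \leq \hinfnorm{H}$. This follows from the Cauchy integral representation
\begin{align*}
  h_k = \frac{1}{2\pi j} \oint_{|z|=1} H(z) z^{k-1} \, dz,
\end{align*}
together with the trivial estimate $|h_k| \leq \max_{|z|=1} |H(z)| = \hinfnorm{H}$. Applying this to $H(z) = G(\gamma z)$ gives $|g_k \gamma^{-k}| \leq \hinfnorm{G(\gamma z)}$, and multiplying through by $\gamma^k$ yields the claim.

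There is no real obstacle here: the entire argument is a one-line computation of the Laurent coefficients of the dilated system combined with the Cauchy estimate. The only mild subtlety is ensuring that $G(\gamma z)$ is indeed analytic on the closed exterior of the unit disk, which is immediate from the assumption $\rho < \gamma < 1$, since this makes $G$ analytic on the disk of radius $\gamma$ outside the unit circle in the $\gamma z$ variable.
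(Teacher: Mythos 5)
Your proof is correct, and it takes essentially the same approach as the paper: both arguments are a one-line Cauchy-type estimate on the coefficients of the dilated system $G(\gamma z)$. The paper phrases it through $H(z) = G(z^{-1})$, the identity $H^{(k)}(0) = k!\, g_k$, and Cauchy's derivative estimate on the circle of radius $1/\gamma$ together with the maximum modulus principle, whereas you absorb the dilation into the function and read the same bound off the unit-circle coefficient integral; the two computations are related by a change of variables and give identical conclusions.
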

\begin{proof}
Define the function $H(z) := G(z^{-1})$, which is analytic for all $\abs{z} \leq 1/\gamma$.
It is easy to check that $k$-th derivative of $H(z)$ evaluated at zero is $H^{(k)}(0) = k! g_k$.
Therefore,
\begin{align*}
  k!\abs{g_k} &= \abs{H^{(k)}(0)} \leq k! \gamma^{k} \max_{\abs{z} \leq 1/\gamma} \abs{H(z)} = k! \gamma^k \max_{\abs{z} \geq \gamma} \abs{G(z)} \\
  &= k!\gamma^k \max_{\abs{z} \geq 1} \abs{G(\gamma z)}
  = k! \gamma^k \hinfnorm{G(\gamma z)} \:.
\end{align*}
Above, the first inequality is Cauchy's estimate formula for analytic functions,
and the last equality follows from the maximum modulus principle.
\end{proof}

We note that the technique used in Lemma~\ref{lem:bounding_C} of considering
the proxy system $G(\gamma z)$ instead of $G(z)$ directly also appears in \cite{boczar17}
in the context of certifying exponential rates of convergence for linear dynamical systems.

%!TEX root = sector.tex

\section{Robust Controller Design}
\label{sec:eval}

In Section~\ref{sec:fir_sysid}, we described how to obtain a
FIR system $G_{\mathrm{fir}}$ with a probabilistic guarantee
that $G = G_{\mathrm{fir}} + \Delta$,
where $\Delta$ is an LTI system satisfying $\norm{\Delta}_\infty \leq \varepsilon$.
This description of $G$ naturally lends itself to many robust control
synthesis methods. In this section, we describe the application of
one particular method based on $\mathcal{H}_\infty$ loop-shaping to a particular unknown plant.

Suppose that $G$ is itself an FIR described with $z$-transform
\begin{align}
    G(z) = \abs{w_0} + \sum_{k=1}^{149} \abs{w_k} \rho^{k-1} z^{-k} \:, \:\: \rho=0.95 \:, \label{eq:g_unknown}
\end{align}
where $w_k \sim N(0, 1)$ are independent Gaussians.
In this section, we will detail the design of a reference tracking controller
for $G$ using probabilistic guarantees.

\subsection{Computing bounds}
\label{sec:eval:bounds}

While the non-asymptotic bounds of Section~\ref{sec:fir_sysid} and
Section~\ref{sec:truncation} give us upper bounds on the error of noisy FIR
approximation, the constant factors in the bounds are not optimal.  Hence,
strictly relying on the bounds will cause oversampling by a constant factor of
(say) $10$ or more.  For real systems, this is extremely undesirable--
using the sharpest bound possible is of great practical interest.
Fortunately, we can do this via simple Monte--Carlo simulations, which we detail in
Section~\ref{sec:appendix:monte_carlo} the appendix. For now, we describe the results of these simulations.

Our first Monte--Carlo simulation establishes
that $G$ satisfies the tail decay specified in
\eqref{eq:assumption_tail_decay}
with $C = 3.9703$ and $\rho = 0.95$.
If we truncate $G$ with $r=75$, we see that our worst-case bound
on $\norm{G - G_r}_\infty$ is
$\norm{G -
G_r}_\infty\leq C \frac{\rho^{r-1}}{1-\rho} = 3.9703 \times
\frac{0.95^{74}}{1-0.95} = 1.7840$.  In general, assuming we have no other
information about $G$ other than the bounds on $C$ and $\rho$, this is the
sharpest approximation error bound possible, since for any system with
real-valued, all non-negative Fourier coefficients, the
$\mathcal{H}_\infty$-norm is simply the sum of the coefficients.

However, if we further assume we know the structure of $G$ as in this case
where we know the form of \eqref{eq:g_unknown}, but not the values of $w_k$, we
can further sharper our approximation bound.
Specifically, we know that $E_{\mathrm{approx}} := \norm{G - G_r}_\infty =
\sum_{k=75}^{149} \abs{w_k} \rho^{k-1}$, and hence we can
perform another Monte--Carlo simulation to estimate the tail probability
of this random variable.
The result of our simulation is that $\Pr( E_{\mathrm{approx}} \leq 0.46 ) \geq
0.99$.  This is a substantial improvement over the previous bound of
$E_{\mathrm{approx}} \leq 1.7840$ which only uses the information contained in
the tail decay.

Furthermore, we can use the same trick to sharpest the estimates from
Lemma~\ref{lem:concentration_of_error}.  We perform our final Monte--Carlo
simulation, this time on the random variable $E_{\mathrm{noise}} :=
\frac{\sigma}{\sqrt{N}} \norm{\sum_{k=0}^{74} \xi_k z^{-k}}_\infty$, with
$\sigma^2 = 1$ and $\xi_k \sim N(0, 1)$. Note that this corresponds to choosing
the inputs to the system as impulse responses, which we recall from
Section~\ref{sec:sysid:aopt} is optimal under $\ell_2$-power constraints.
Doing this simulation, we obtain that $\Pr(E_{\mathrm{noise}} \leq 3.5954) \geq
0.99$.

\subsection{Controller design}

\begin{figure}[ht]
\centering
\includegraphics[width=0.7\textwidth]{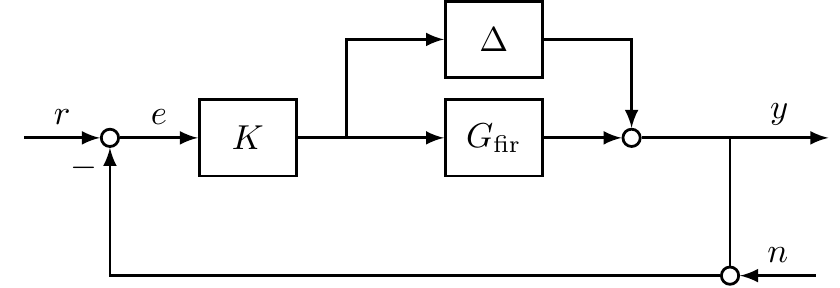}
\caption{Closed-loop experimental setup. The goal is to design the controller $K$.
$G_{\mathrm{fir}}$ is estimated from noisy output data,
and $\norm{\Delta}_\infty$ is bounded via Monte--Carlo simulations.
}
\label{fig:clp}
\end{figure}

Our goal is to design a controller $K$ in the setup described
in Figure~\ref{fig:clp}, under the assumption that
$\norm{\Delta}_\infty \leq E_{\mathrm{approx}} + E_{\mathrm{noise}} \leq 4.0554$.
This assumption comes from the calculations in Section~\ref{sec:eval:bounds}.
We note that $\norm{\Delta}_\infty / \norm{G}_\infty$ fluctuates between
10-20\%, so $G_{\mathrm{fir}}$ is a relatively coarse description of $G$.
We use standard loop-shaping performance goals (see e.g.
\cite{doyle90}).
Let $T_{r \mapsto e}$ and $T_{n \mapsto e}$ denote the
transfer functions from $r \mapsto e$ and $n \mapsto e$, respectively.
At low frequencies, we would like
$\abs{T_{r \mapsto e}}$ to have small gain, and at high frequencies we would like
$\abs{T_{r \mapsto e}} \leq 2$.
Similarly, we would like $\abs{T_{n \mapsto e}} \leq 2$ at low frequencies
and $\abs{T_{n \mapsto e}}$ small at high frequencies.
Of course, we would like these goals to be achieved, in addition to
closed loop stability, for all $G = G_{\mathrm{fir}} + \Delta$.

We proceed in two steps. We first design a controller with the nominal $G_{\mathrm{fir}}$
using $\mathcal{H}_\infty$ loop-shaping synthesis (\texttt{mixsyn} in MATLAB).
We choose weights to encourage our performance goals on $T_{r \mapsto e}$
and $T_{n \mapsto e}$ to be met.
Next, we check that our performance goal is met, in addition to robust
stability.  To make the computation easier, we check the performance goals
separately.  First, it is well known (see e.g.~\cite{doyle90}) that the goal on
$T_{r \mapsto e}$ is met (in addition to robust stability) if the following
holds
\begin{align}
  \norm{ \abs{W_1 S} + \gamma \abs{KS} }_\infty < 1 \:, \label{eq:robust_inequality}
\end{align}
where $S = \frac{1}{1+KG_{\mathrm{fir}}}$ and $\gamma = 4.0554$.
Specifically, under \eqref{eq:robust_inequality},
the closed loop with $K$ in feedback with $G$ is stable and achieves the performance
guarantee $\abs{T_{r \mapsto e}(z)} \leq \frac{1}{\abs{W_1(z)}}$ for every frequency $z \in \Torus$.
On the other hand, to the best of our knowledge no
simple expression for the performance goal on $T_{n \mapsto e}$ exists, so we resort
to a standard structured singular value (SSV) calculation~\cite{packard93}.

We generate our controller $K$ via the following MATLAB commands
\begin{verbatim}
w_c = 0.07;                            % Cross-over freq
W1 = makeweight(5000, w_c, .5, 1);     % Low-freq disturbance rejection
W2 = 1.5*fir_error_bound;              % Robust stability
W3 = makeweight(.5, 3 * w_c, 5000, 1); % High-freq noise insensitivity
P = augw(G_fir, W1, W2, W3);
K = hinfsyn(P);
\end{verbatim}

\begin{figure}[t!]
  \centering
  \begin{minipage}[t]{0.45\textwidth}
  \begin{center}
  \includegraphics[width=\columnwidth]{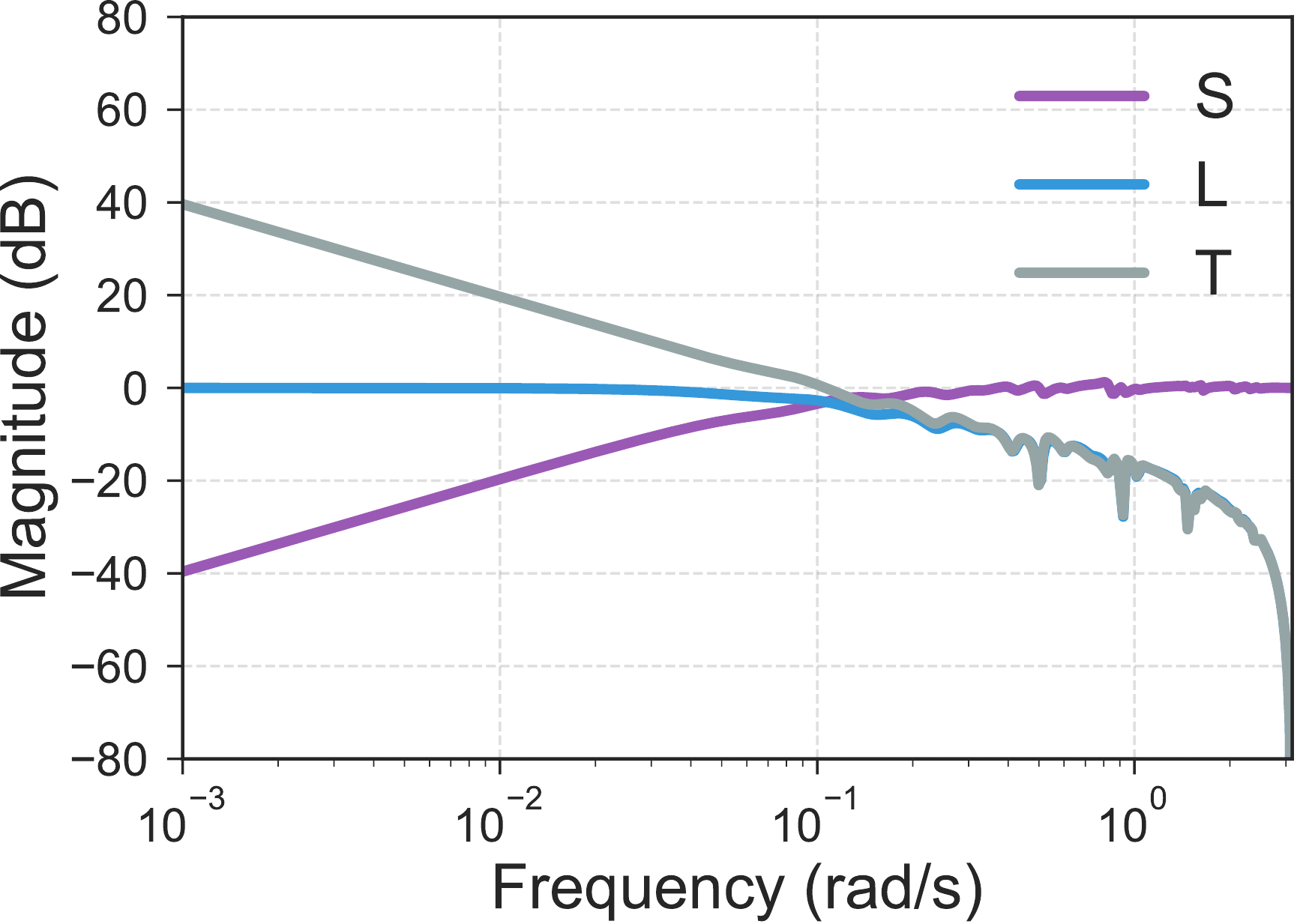}
  \end{center}
  \caption{Loop-shaping curves from the experimental setup
      of Figure~\ref{fig:clp}. $L = G_{\mathrm{fir}} K$ denotes the open-loop gain,
      $S = 1/(1+L)$ is the sensitivity function, and $T = 1 - S$.}
    \label{fig:loopshapes}
  \end{minipage}
  \hspace{.06\textwidth}
    \begin{minipage}[t]{0.45\textwidth}
    \begin{center}
  \includegraphics[width=\columnwidth]{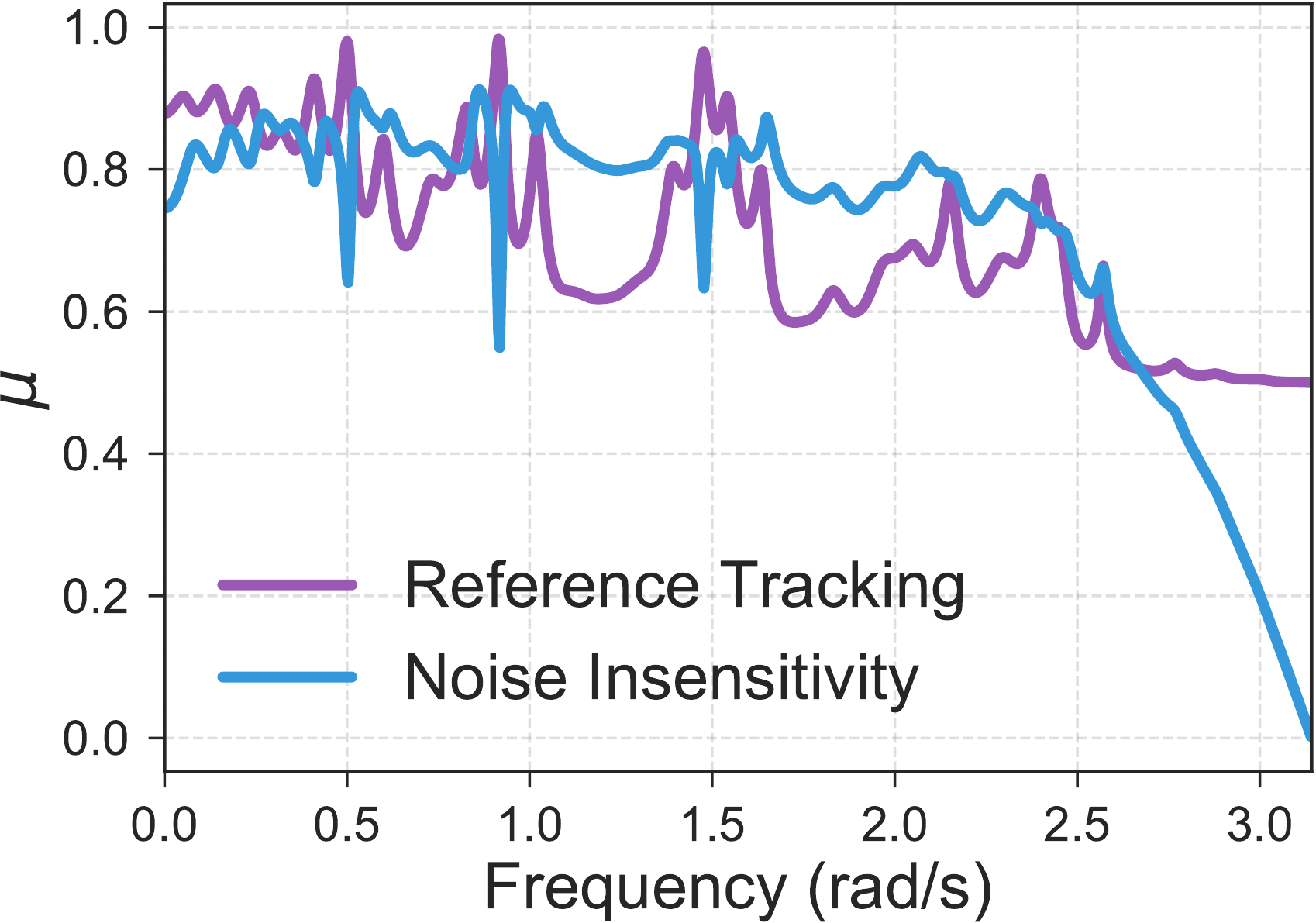}
  \end{center}
\caption{The pointwise frequency $\mu$ value for both
        reference tracking $T_{r\mapsto e}$
        and noise insensitivity $T_{n\mapsto e}$.
        Robust performance is guaranteed as the curve lies below 1
        at all frequencies.
        }
\label{fig:robperf}
  \end{minipage}
  \vspace{0.1in}
\end{figure}

In Figure~\ref{fig:loopshapes}, we plot the open loop gain $L = G_{\mathrm{fir}} K$,
sensitivity function $S = 1/(1+L)$, and complementary sensitivity function $T = 1 - S$.
Here, we see that the cross-over frequency $\omega_c \approx 0.1$.
Next, in Figure~\ref{fig:robperf}, we plot the $\mu$ values for both the
reference tracking objective $T_{r \mapsto e}$ and the noise insensitivity
objective $T_{n \mapsto e}$, and check that both curves lies below 1 for all
frequencies.  Recall that this means that $G$ in feedback with $K$ is not only
exponentially stable, but also satisfies both performance guarantees.
Finally, in Figure~\ref{fig:clpsim}, we plot the output $y$ as a function of a
noisy square wave input $u$, to show the desired reference tracking behavior,
on both the closed loop simulation (with $G_{\mathrm{fir}}$), and the actual
closed loop behavior (with $G$). This shows that, while the model $G_{\mathrm{fir}}$
was a coarse grained description of $G$ with up to 20\% relative error,
it was faithful enough to allow for a robust controller design.

\begin{figure}[t!]
  \centering
  \begin{minipage}[t]{0.45\textwidth}
  \begin{center}
  \includegraphics[width=\columnwidth]{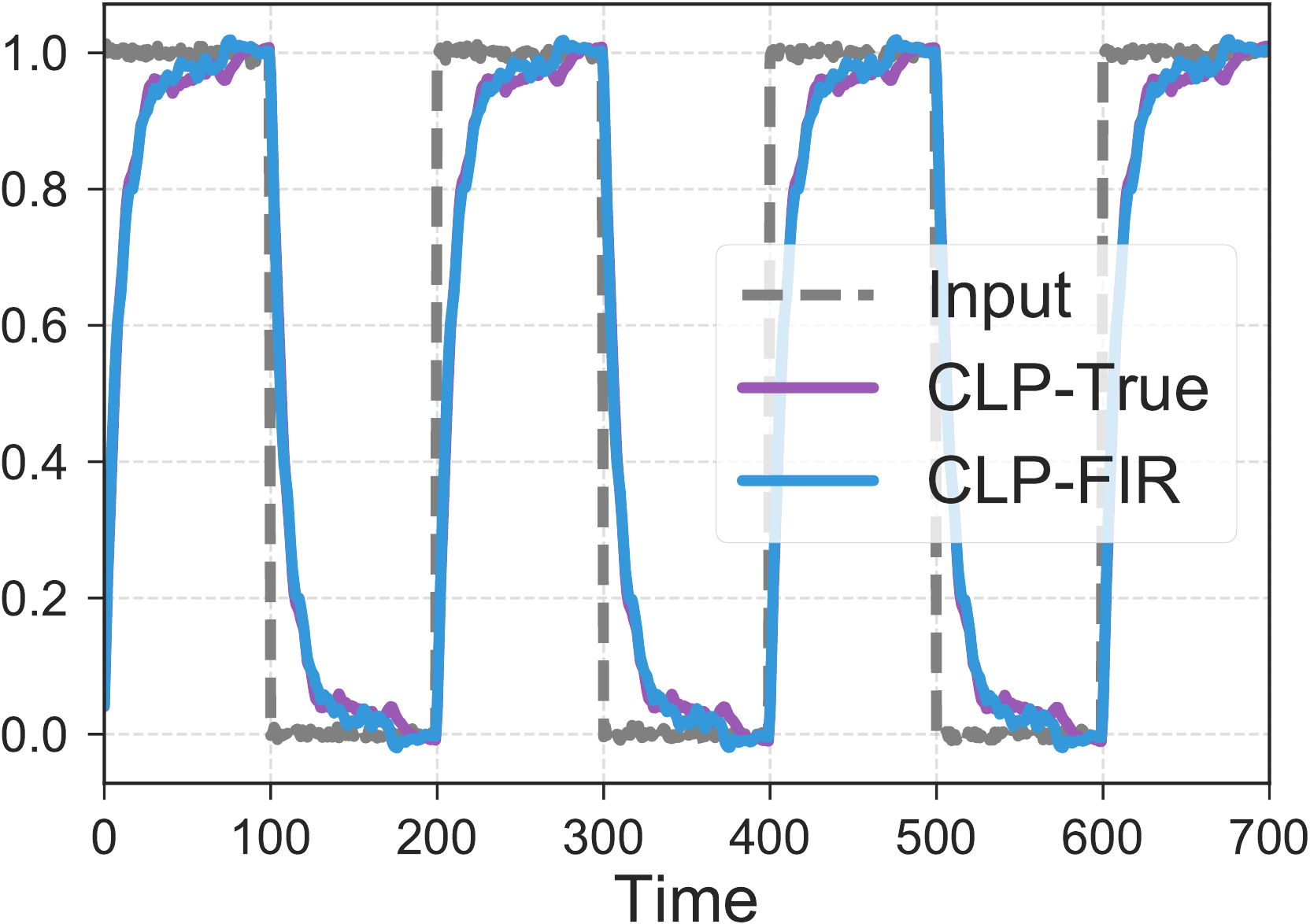}
  \end{center}
\caption{Reference tracking behavior of the closed loop with the model $G_{\mathrm{fir}}$ and the actual plant $G$.}
\label{fig:clpsim}
  \end{minipage}
  \hspace{.06\textwidth}
    \begin{minipage}[t]{0.45\textwidth}
    \begin{center}
  \includegraphics[width=\columnwidth]{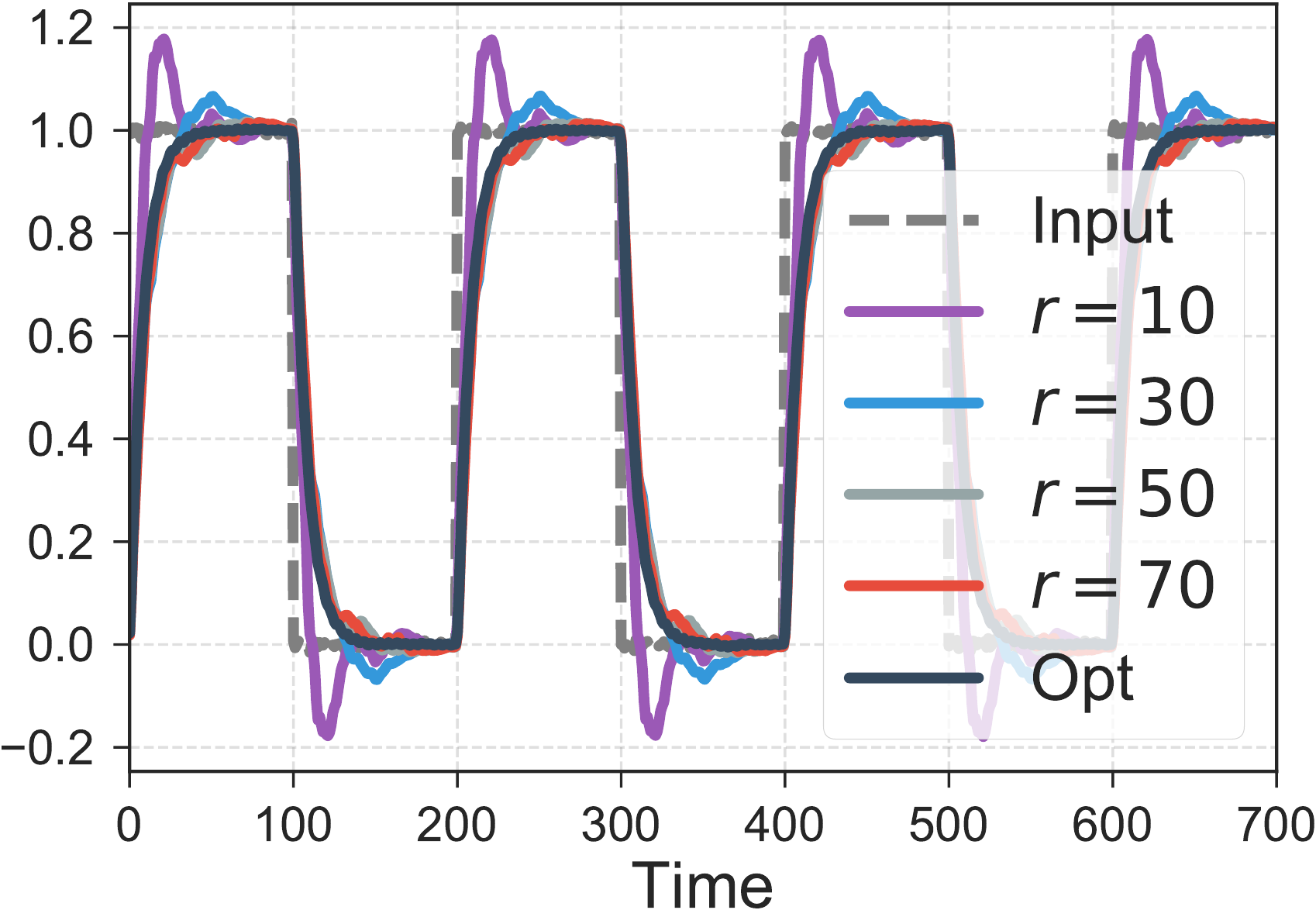}
  \end{center}
\caption{Reference tracking behavior as the FIR truncation
        length is varied from $r=10,30,50,70$.}
\label{fig:vary_r}
  \end{minipage}
  \vspace{0.1in}
\end{figure}

\subsection{Varying truncation length}
We next study the effect of truncation length $r$ on controller design.
In Figure~\ref{fig:vary_r}, we assume the same setup and
performance goals as the previous section,
but vary the truncation length $r \in \{10, 30, 50, 70\}$.
We also include the result of a controller design which has full
knowledge of the true system $G$, which we label as $\mathsf{Opt}$.
We see that for $r=10$, the resulting controller unsurprisingly
has undesirable overshoot behavior. However, as $r$ increases
the resulting controller mimics the behavior of $\mathsf{Opt}$ quite closely.
This plot shows that, at least for reference tracking behavior,
a fairly low-fidelity model suffices.
For instance, across different trials, the relative error of $G_{\mathrm{fir}}$
for $r=30$ fluctuated between 15\% to 30\%, but in many cases $r=30$ was able
to provide reasonable reference tracking behavior.

%!TEX root = sector.tex

\section{Conclusion}

This paper explored the use of a coarse-grained FIR model estimated from noisy
output data for control design.
We showed that sharp bounds on the $\mathcal{H}_\infty$ error between the true unknown
plant and the estimated FIR filter can be derived using tools from
concentration of measure, and the constant factors on these bounds can be
further refined via Monte--Carlo simulation techniques.
Finally, we demonstrated empirically that one can perform controller synthesis
using only a coarse-grained approximation of the true system while meeting
certain performance goals.

There are many possible future extensions of our work. We highlight a few ideas
below.

\paragraph{End-to-end Guarantees.}
In Section~\ref{sec:eval}, we studied empirically the performance of the
controller synthesized on our nominal system versus the optimal controller
synthesized on the true system. We would like to push our analysis further
and provide guarantees on this performance gap as a function of our estimation
error. This would allow us to provide a true end-to-end guarantee
on the number of samples needed to control an unknown system to a
certain degree of performance.

\paragraph{MIMO Systems.}
While our approach can be generalized to the MIMO case by estimating filters
for each input/output pair separately, we believe that when the MIMO transfer
matrix has special structure (e.g. low rank), it should be possible to couple
the estimation procedure to reduce the $n^2$ factor increase in sample complexity.
This is motivated by the vast literature on compressed sensing, where 
sparse models embedded in a much larger ambient dimension can be uniquely recovered
with at most a logarithmic factor more samples than the degree of the 
intrinsic sparsity.

\paragraph{Nonlinear Systems.}
An extension of these techniques to nonlinear systems is another
exciting direction. One possible idea is to treat a nonlinear system's
Jacobian linearization as the target unknown system, and fit a FIR using our
techniques by exciting the nonlinear system locally. One would expect that the
controller designed on the FIR would be valid in a neighborhood, and upon
exiting the neighborhood, the process would repeat itself.
The challenge here remains to estimate online the regime for which a controller
is valid.

%!TEX root = sector.tex

\section*{Acknowledgements}

We thank 
Orianna DeMasi for helpful comments and suggestions regarding this manuscript,
Kevin Jamieson for insightful discussions regarding Monte--Carlo simulation,
Anders Rantzer for pointing out references related to this work,
Max Simchowitz for guidance regarding the proof of Theorem~\ref{thm:main_result_lower_bound}, and 
Vikas Sindhwani for providing ideas around linear control techniques for nonlinear systems.
RB is supported by the Department of Defense NDSEG Scholarship.
AP gratefully acknowledges generous support from the FANUC Corporation as well
as the National Science Foundation under grant ECCS-1405413.
BR is generously supported by NSF award CCF-1359814, ONR awards
N00014-14-1-0024 and N00014-17-1-2191, the DARPA Fundamental Limits of Learning
(Fun LoL) Program, a Sloan Research Fellowship, and a Google Faculty Award.

{
\if\MODE1\else\small\fi
\bibliographystyle{abbrv}
\bibliography{sector_bib}

\begin{thebibliography}{10}

\bibitem{bhaskar12}
B.~N. Bhaskar, G.~Tang, and B.~Recht.
\newblock Atomic norm denoising with applications to line spectral estimation.
\newblock {\em arXiv:1204.0562}, 2012.

\bibitem{boczar17}
R.~Boczar, L.~Lessard, A.~Packard, and B.~Recht.
\newblock {Exponential Stability Analysis via Integral Quadratic Constraints}.
\newblock {\em arXiv:1706.01337}, 2017.

\bibitem{boucheron16}
S.~Boucheron, G.~Lugosi, and P.~Massart.
\newblock {\em {Concentration Inequalities: A Nonasymptotic Theory of
  Independence}}.
\newblock 2016.

\bibitem{campi2002finite}
M.~C. Campi and E.~Weyer.
\newblock {Finite Sample Properties of System Identification Methods}.
\newblock {\em IEEE Transactions on Automatic Control}, 47(8), 2002.

\bibitem{chen00}
J.~Chen and G.~Gu.
\newblock {\em Control-Oriented System Identification: An $\mathcal{H}_\infty$
  Approach}.
\newblock 2000.

\bibitem{chen93}
J.~Chen and C.~N. Nett.
\newblock {The Carath{\'{e}}odory-Fej{\'{e}}r Problem and $\mathcal{H}_\infty$
  Identification: A Time Domain Approach}.
\newblock In {\em Conference on Decision and Control}, 1993.

\bibitem{doyle90}
J.~Doyle, B.~Francis, and A.~Tannenbaum.
\newblock {\em {Feedback Control Theory}}.
\newblock 1990.

\bibitem{gerencser92}
L.~Gerencs{\'{e}}r.
\newblock {$\mathrm{AR}(\infty)$ Estimation and Nonparametric Stochastic
  Complexity}.
\newblock {\em IEEE Transactions on Information Theory}, 38(6), 1992.

\bibitem{goldenshluger98}
A.~Goldenshluger.
\newblock {Nonparametric Estimation of Transfer Functions: Rates of Convergence
  and Adaptation}.
\newblock {\em IEEE Transactions on Information Theory}, 44(2), 1998.

\bibitem{goldenshluger01}
A.~Goldenshluger and A.~Zeevi.
\newblock {Nonasymptotic Bounds for Autoregressive Time Series Modeling}.
\newblock {\em The Annals of Statistics}, 29(2), 2001.

\bibitem{hardt2016gradient}
M.~Hardt, T.~Ma, and B.~Recht.
\newblock {Gradient Descent Learns Linear Dynamical Systems}.
\newblock {\em arXiv:1609.05191}, 2016.

\bibitem{helmicki91}
A.~J. Helmicki, C.~A. Jacobson, and C.~N. Nett.
\newblock {Control Oriented System Identification: A Worst-Case/Deterministic
  Approach in $\mathcal{H}_\infty$}.
\newblock {\em IEEE Transactions on Automatic Control}, 36(10), 1991.

\bibitem{hindi02}
H.~Hindi, C.-Y. Seong, and S.~Boyd.
\newblock {Computing Optimal Uncertainty Models from Frequency Domain Data}.
\newblock In {\em Conference on Decision and Control}, 2002.

\bibitem{horn05}
R.~A. Horn and F.~Zhang.
\newblock {Basic Properties of the Schur Complement}.
\newblock In F.~Zhang, editor, {\em {The Schur Complement and Its
  Applications}}, 2005.

\bibitem{kahane94}
J.-P. Kahane.
\newblock {\em {Some Random Series of Functions}}.
\newblock 1994.

\bibitem{ljung99}
L.~Ljung.
\newblock {\em {System Identification: Theory for the User}}.
\newblock 1999.

\bibitem{ljung92}
L.~Ljung and B.~Wahlberg.
\newblock {Asymptotic Properties of the Least-Squares Method for Estimating
  Transfer Functions and Disturbance Spectra}.
\newblock {\em Advances in Applied Probability}, 24(2), 1992.

\bibitem{ljung85}
L.~Ljung and Z.-D. Yuan.
\newblock {Asymptotic Properties of Black-Box Identification of Transfer
  Functions}.
\newblock {\em IEEE Transactions on Automatic Control}, 30(6), 1985.

\bibitem{mcdonald2012nonparametric}
D.~J. McDonald, C.~R. Shalizi, and M.~Schervish.
\newblock {Nonparametric Risk Bounds for Time-Series Forecasting}.
\newblock {\em Journal of Machine Learning Research}, 18, 2017.

\bibitem{mcfarlane92}
D.~McFarlane and K.~Glover.
\newblock {A Loop Shaping Design Procedure Using $\mathcal{H}_\infty$
  Synthesis}.
\newblock {\em IEEE Transactions on Automatic Control}, 37(6), 1992.

\bibitem{meckes07}
M.~W. Meckes.
\newblock {On the spectral norm of a random Toeplitz matrix}.
\newblock {\em Electron. Commun. Probab.}, 12, 2007.

\bibitem{mosek}
{MOSEK ApS}.
\newblock {\em {The MOSEK optimization toolbox for MATLAB manual. Version 7.1
  (Revision 28).}}, 2015.

\bibitem{packard93}
A.~Packard and J.~Doyle.
\newblock {The Complex Structured Singular Value}.
\newblock {\em Automatica}, 29(1), 1993.

\bibitem{pukelsheim93}
F.~Pukelsheim.
\newblock {\em {Optimal Design of Experiments}}.
\newblock 1993.

\bibitem{rojas2012analyzing}
C.~R. Rojas, T.~Oomen, H.~Hjalmarsson, and B.~Wahlberg.
\newblock {Analyzing Iterations in Identification with Application to
  Nonparametric $\mathcal{H}_\infty$-norm Estimation}.
\newblock {\em Automatica}, 48(11), 2012.

\bibitem{salem54}
R.~Salem and A.~Zygmund.
\newblock {Some properties of trigonometric series whose terms have random
  signs}.
\newblock {\em Acta Mathematica}, 91, 1954.

\bibitem{shah2012}
P.~Shah, B.~N. Bhaskar, G.~Tang, and B.~Recht.
\newblock {Linear System Identification via Atomic Norm Regularization}.
\newblock In {\em Conference on Decision and Control}, 2012.

\bibitem{shibata80}
R.~Shibata.
\newblock {Asymptotically Efficient Selection of the Order of the Model for
  Estimating Parameters of a Linear Process}.
\newblock {\em The Annals of Statistics}, 8(1), 1980.

\bibitem{vidyasagar2008learning}
M.~Vidyasagar and R.~L. Karandikar.
\newblock A learning theory approach to system identification and stochastic
  adaptive control.
\newblock {\em Journal of Process Control}, 18(3), 2008.

\bibitem{wahlberg2010non}
B.~Wahlberg, M.~B. Syberg, and H.~Hjalmarsson.
\newblock {Non-parametric methods for $L_2$-gain estimation using iterative
  experiments}.
\newblock {\em Automatica}, 46(8), 2010.

\bibitem{wasilewski12}
M.~Wasilewski.
\newblock Trace matrix inequality.
\newblock MathOverflow.
\newblock URL (version: 2017-07-05): \url{https://mathoverflow.net/q/106233}.

\bibitem{yu97}
B.~Yu.
\newblock {Assouad, Fano, and Le Cam}.
\newblock In {\em Festschrift for Lucien Le Cam: Research Papers in Probability
  and Statistics}, 1997.

\end{thebibliography}
}
\clearpage
%!TEX root = sector.tex

\appendix
%this causes the equation numbering to be messed up
%\section*{Appendix}
%\renewcommand{\thesubsection}{\Alph{subsection}}

\section{Proof of Lemma~\ref{lem:orthogonal_vectors_M}}
\label{proof:lem_orthogonal_vectors_M}
We will induct on $n$, for which the base cases $n=1$ holds. Denote the $i$-right shift operator applied to $u_k$ by $u_k^{[i]}$. Now, assume the property holds for $n$. This implies that
\begin{align*}
M_{ij} = &\; \sum_{k=0}^{2^n-1} u_k^{[i]} \cdot u_k^{[j]} = 0 \\
M_{ii} = &\; \sum_{k=0}^{2^n-1} u_k^{[i]} \cdot u_k^{[i]} = 2^n(2^n-i)\:.
\end{align*}
Now, construct $\{\tilde u_k\}$ as before, and note that for $l=0,\ldots,2^{n}-1$,
\begin{equation*}
\tilde u_{2l}^{[i]} = \; \begin{cases}
\begin{bmatrix}0_{i\times1}\\u_l\\u_l^{[i]}\end{bmatrix}, & i\leq 2^n \\
\begin{bmatrix}0_{i\times1}\\u_l^{[i-2^n]}\end{bmatrix}, & i >  2^n
\end{cases}\quad\quad
\tilde u_{2l+1}^{[i]} = \; \begin{cases}
\begin{bmatrix}0_{i\times1}\\u_l\\-u_l^{[i]}\end{bmatrix}, & i\leq 2^n \\
\begin{bmatrix}0_{i\times1}\\-u_l^{[i-2^n]}\end{bmatrix}, & i >  2^n
\end{cases}\:.
\end{equation*}
Thus, for $i\leq 2^n$, when $i \neq j$,
\begin{align*}
\tilde M_{ij} = &\; \sum_{k=0}^{2^{n+1}-1} \tilde u_k^{[i]} \cdot \tilde u_k^{[j]} \\
= &\;\sum_{l=0}^{2^n-1} \tilde u_{2l}^{[i]} \cdot \tilde u_{2l}^{[j]}+\tilde u_{2l+1}^{[i]} \cdot \tilde u_{2l+1}^{[j]}\\
= &\;\sum_{l=0}^{2^n-1}  u_l \cdot  u_l+ u_l^{[i]} \cdot  u_l^{[j]} -
 u_l \cdot  u_l+ u_l^{[i]} \cdot  u_l^{[j]} = 0\:.
 \end{align*}
Furthermore,
 \begin{align*}
\tilde M_{ii} = &\;\sum_{k=0}^{2^{n+1}-1} \tilde u_k^{[i]} \cdot \tilde u_k^{[i]} \\
= &\; \sum_{l=0}^{2^n-1} \tilde u_{2l-1}^{[i]} \cdot \tilde u_{2l-1}^{[i]}+\tilde u_{2l}^{[i]} \cdot \tilde u_{2l}^{[i]}\\
= &\; \sum_{l=0}^{2^n-1}  u_l \cdot  u_l+ u_l^{[i]} \cdot  u_l^{[i]} +
 u_l \cdot  u_l+ u_l^{[i]} \cdot  u_l^{[i]} \\
= &\; 2(2^n(2^n-i)) + 2(2^n(2^n))\\
= &\; 2^{n+1}(2^{n+1}-i)\:.
\end{align*}
A similar calculation holds for $i>2^n$. Thus, by induction, the property holds for all $r=2^n$.

\section{Details for Monte--Carlo simulations}
\label{sec:appendix:monte_carlo}

In all of our simulations we are faced with the following problem which we describe
in some generality.
Let $X$ be a random variable distributed according to the law $\Pr$.
We assume we have access to iid samples from $\Pr$. Our goal is to estimate an upper bound on
$\Pr(X \geq t)$ for a fixed $t \in \R$.

If the law $\Pr$ admits a density $f(\cdot)$ with respect to the Lebesgue measure, a
possible solution could be to solve this problem exactly by numerically integrating
\begin{align*}
    \int_{X(\xi) \geq t} X(\xi) f(\xi) \; d\xi \:.
\end{align*}
However, numerical integration does not scale favorably with dimension.
For our experiments, $\xi$ is 75-dimensional, which is
prohibitive for numerical integration.

An alternative approach to numerical integration is to rely on concentration of
measure.  Let $X_1, ..., X_N$ be iid copies of $X$, and let $\Pr^N$ denote the
product measure $\Pr^N = \otimes_{k=1}^{N} \Pr$.
Using a Chernoff bound and defining $F_t := \Pr(X \geq t)$, we have
\begin{align}
    \Pr^N\left( \frac{1}{N} \sum_{k=1}^{N} \ind_{\{X_k \geq t\}} \leq F_t - \varepsilon \right) \leq e^{-N \cdot D(F_t - \varepsilon, F_t)} \:, \label{eq:chernoff_bound}
\end{align}
where $D(p, q) = p \log(p/q) + (1-p) \log((1-p)/(1-q))$ is the KL-divergence between two Bernoulli distributions.
Given a $\delta \in (0, 1)$,
define the random variable $Q$ as the solution to the implicit equation
\begin{align}
    N \cdot D\left(\frac{1}{N} \sum_{k=1}^{N} \ind_{\{{X_k} \geq t\}}, Q\right) = \log(1/\delta) \:. \label{eq:KL_invert}
\end{align}
Note that, from a realization of $X_1, ..., X_N$, the realization of $Q$
from \eqref{eq:KL_invert} can be solved for by numerical root finding.
Plugging the definition of $Q$ back into the Chernoff inequality \eqref{eq:chernoff_bound}, we conclude that
there exists an event $\mathcal{E}$ (in the product $\sigma$-algebra) such that on $\mathcal{E}$ the inequality $F_t \leq Q$ holds,
and furthermore $\Pr^N(\mathcal{E}) \geq 1-\delta$.
This is the methodology which we use to generate all our bounds, with $\delta = 10^{-4}$.
Hence, the statements of the form ``$F_t \leq \gamma$'' in Section~\ref{sec:eval:bounds} should
be understood as operating under the assumption that our implementation of the
simulation chose a particular realization which is contained in the
simulator event $\mathcal{E}$ described previously.

\end{document}